\newcommand{\ifmma}[1]{{}}
\newtheorem{theorem}{Theorem}[section]
\newtheorem{remark}[theorem]{Remark}
\newcommand{\eps}{\varepsilon} 
\newcommand{\IR}{{\mathbb R}}
\newcommand{\IC}{{\mathbb C}}
\newcommand{\IN}{{\mathbb N}}
\newcommand{\ve}{{\mathbf e}}
\newcommand{\vf}{{\mathbf f}}
\newcommand{\vh}{{\mathbf h}}
\newcommand{\vn}{{\mathbf n}}
\newcommand{\vx}{{\mathbf x}}
\newcommand{\vu}{{\mathbf u}}
\newcommand{\vv}{{\mathbf v}}
\newcommand{\vw}{{\mathbf w}}
\newcommand{\vH}{{\mathbf H}}
\newcommand{\vJ}{{\mathbf J}}
\newcommand{\vV}{{\mathbf V}}
\DeclareMathOperator{\realpart}{Re}
\renewcommand{\Re}{{\realpart}}
\newcommand{\imag}{\ensuremath{{\rm i}}}
\newcommand{\wt}{\widetilde}
\newcommand{\de}[1]{\partial_{#1}}
\DeclareMathOperator{\Div}{div}
\newcommand{\plcurl}{\operatorname{{\bf curl}}_{2D}}
\newcommand{\scurl}{\operatorname{curl}_{2D}}
\newcommand{\zerobf}{\boldsymbol{0}}
\newcommand{\td}{\tau}
\newcommand{\Gammat}{\Gamma}
\newcommand{\BL}{{\mathrm{BL},\eps}}
\newcommand{\ie}{\textit{i.\,e\mbox{.}}\xspace}
\tikzset{dash dotted/.style={dash pattern=on 1pt off 4pt on 6pt off 4pt}} %
\numberwithin{equation}{section}
\definecolor{green07}{rgb}{0, 0.7, 0}
\definecolor{green08}{rgb}{0, 0.8, 0}
\definecolor{byzantium}{rgb}{0.44, 0.16, 0.39}
\definecolor{burgundy}{rgb}{0.8, 0.0, 0.13}
\definecolor{chocolate}{rgb}{0.48, 0.25, 0.0}
\definecolor{indigo}{rgb}{0.29, 0.0, 0.51}
\definecolor{darkgreen}{rgb}{0.0, 0.55, 0.0}
\begin{document}
\begin{center} {\Large{Multiharmonic analysis for nonlinear acoustics with different scales}}\\[0.5cm]

{\large{Anastasia Th\"ons-Zueva$^{a,\,b}$, Kersten Schmidt$^{a,\,b,\,c}$, Adrien Semin$^{a,\,b}$}}\\[0.5cm]

{\small $a$: Research center Matheon, 10623 Berlin, Germany}\\
{\small $b$: Institut f\"ur Mathematik, Technische Universit\"at
  Berlin, Stra{\ss}e des 17. Juni 136, 10623 Berlin, Germany} \\
{\small $c$: Brandenburgische Technische Universit\"at Cottbus-Senftenberg, Institut f\"ur Mathematik, 
  Platz der deutschen Einheit~1, 03046 Cottbus, Germany} \\
\end{center}

{\small \noindent  \textbf{Corresponding author:} Anastasia Th\"ons-Zueva, Institut f\"ur Mathematik, Technische Universit\"at Berlin, Berlin, Germany\\
Address: Technische Universität Berlin,
Sekretariat MA 6-4,
Straße des 17. Juni 136,
D-10623 Berlin\\
E-mail: zueva@math.tu-berlin.de\\
Tel: +49 (0)30 314 - 25192}\\[0.5cm]


\textbf{Abstract}\\
The acoustic wave-propagation without mean flow and heat flux can be
described in terms of velocity and pressure by the compressible
nonlinear Navier-Stokes equations, where boundary layers appear at
walls due to the viscosity and a frequency interaction appears, \ie\
sound at higher harmonics of the excited frequency $\omega$ is
generated due to nonlinear advection. %
We use the multiharmonic analysis to derive asymptotic expansions for
small sound amplitudes and small viscosities both of order $\eps^2$ in
which velocity and pressure fields are separated into far field and
correcting near field close to walls and into contributions to the
multiples of $\omega$. %
Based on the asymptotic expansion we present approximate models for
either the pressure or the velocity for order $0$, $1$ and $2$, in
which impedance boundary conditions include the effect of viscous
boundary layers and contributions at frequencies $0$ and
$2\cdot\omega$ depend nonlinearly on the approximation at frequency
$\omega$.  In difference to the Navier-Stokes equations in time
domain, which has to be resolved numerically with meshes adaptively
refined towards the wall boundaries and explicit schemes require the
use of very small time steps, the approximative models can be solved
in frequency domain on macroscopic meshes. %
We studied the accuracy of the approximated models of different orders
in numerical experiments comparing with reference solutions in
time-domain.

\textbf{Keywords}\\
Acoustic wave propagation, Singularly perturbed PDE, Impedance Boundary Conditions, Asymptotic Expansions.

\textbf{AMS subject classification}\\
35C20, 
41A60, 
42A16, 
35Q30, 
76D05

\numberwithin{equation}{section}

\ifthenelse{\boolean{tableOfContents}}{%
  \tableofcontents
}{}%


\section{Introduction}

In this article we continue investigating the acoustic equations in the
framework of Landau and Lifschitz~\cite{Landau:1959} as a perturbation
of the Navier-Stokes equations around a stagnant uniform
fluid where heat flux is not taken into account. %
The aim of this study is to take into account nonlinear advection behaviour as
well as viscous effects in the boundary layer near rigid walls.
The governing equations in \textit{time domain} similar to the works of 
Tam et al.~\cite{Tam.Kurbatskii:2000,Tam.Kurbatskii.Ahuja.Gaeta:2001}, but for the case
of isothermal process, \ie\ 
pressure over density is constant over space, may be written as 
\begin{subequations}
\label{eq:navier.stokes:orig}
\begin{align}
  \partial_{t}\vv +(\vv\cdot\nabla)\vv  + \nabla p - \nu\Delta \vv %
  &=  \vf,%
  && \text{in }\Omega,
  \label{eq:navier.stokes:orig:mom} \\ 
  \partial_{t} p + c^2\,\Div \vv + {\Div(p\,\vv)} &= 0,%
  &&\text{in }\Omega, \label{eq:navier.stokes:orig:pr}\\
  \label{eq:navier.stokes:orig:bound}
  \vv &= \zerobf, && \text{on } \partial\Omega.
\end{align}
\end{subequations}
where $\vv$ is the acoustic velocity, $p=p'/\rho_0$ with $p'$ being the acoustic pressure
and $\rho_0 > 0$ being mean density, 
$c$ is the speed of sound, and $\nu > 0$ is the kinematic viscosity.
The introduction of $p$ instead of $p'$ is only to simplify the equations by removing the constant $\rho_0$ %
and we will regard $p$ as pressure and approximations to $p$ as pressure approximations.
In the \textit{momentum equation}~\eqref{eq:navier.stokes:orig:mom} with
some known source term $\vf$ the viscous dissipation in the momentum as well as 
the advection nonlinear term
are not neglected as we consider near wall regions where the derivatives of the 
acoustic velocity are rapidly increasing and might be crucial. %
The \textit{continuity equation}~\eqref{eq:navier.stokes:orig:pr} relates the
acoustic pressure to the divergence of the acoustic velocity. %
The system is completed by \textit{no-slip} boundary conditions. %

For gases the viscosity $\nu$ is very small and leads to
\textit{viscosity boundary layers} close to walls. These boundary layers are difficult 
to resolve in direct numerical simulations. Nevertheless, they have an essential influence
on the absorption properties. 
Mainly based on experiments the physical community has introduced slip boundary
conditions for the tangential component of the velocity, also known as wall laws,
see for example~\cite{Iftimie.Sueur:2010,Rienstra.Darau:2011,Rienstra:2006}.
For gases with small viscosity the Helmholtz equation can be completed
by viscosity dependent boundary conditions~\cite{Auregan.Starobinski.Pagneux:2001} 
to obtain an approximation of high accuracy for which the boundary layers do not
have to be resolved by finite element meshes~\cite{IhlenburgBook}. %

%

In the earlier works we studied the linear acoustic equations taking into account viscous effects 
in the boundary layer near rigid walls. In~\cite{Schmidt.Thoens.Joly:2014} we derived a complete 
asymptotic expansion for the problem based on the technique of multiscale
expansion in powers of $\sqrt{\eta}$, where $\eta$ is the dynamic viscosity
{and $\nu = \eta/\rho$}. 
This asymptotic expansion was rigorously justified with optimal error estimates.
In~\cite{Schmidt.Thoens:2014} we proposed and justified (effective)
impedance boundary conditions for the velocity as well as the pressure for possibly curved 
boundaries.


In case of stable periodic oscillations in nonlinear dynamical systems the
harmonic balance principal is used~\cite{Nayfeh.Mook:1995, Szemplinska-Stupnicka:1990, 
Weeger.Wever.Simeon:2014}.
It is described as a linear combination of a wave of the excitation frequency and its harmonics.
The referred method was presented as multiharmonic analysis for the modelling of nonlinear magnetic 
materials
~\cite{Bachinger.Langer.Schoeberl:2005,Bachinger.Langer.Schoeberl:2006}. Its stability has been 
demonstrated within the eddy current model.
For nonlinear Hamiltonian systems with a simple oscillator a special case of multiharmonic 
analysis, 
the so-called 
modulated Fourier expansion~\cite{Hairer.Lubich:2013}, has been well developed. 
The multiharmonic analysis as a method in the frequency domain is especially attractive for 
nonlinear acoustics. 
This application has not previously been investigated, either numerically or with asymp\-to\-tic 
expansions.
%
%
%
In the current work we restrict ourselves to the case of small sound amplitudes that are of order $O({\nu})$.

{The article is ordered as follows. In Sec.~\ref{sec:Main} 
we introduce a frequency domain system for the quasi-stationary solution of the nonlinear acoustic wave propagation problem %
using the multiharmonic analysis. Moreover, the main ideas of the multiscale expansions
separating far field and boundary layer contributions are introduced, that lead to the 
effective systems with impedance boundary conditions, both for the velocity and the pressure,
that are finally introduced as main results of the paper. %
The far field and boundary layer terms of the multiscale expansion and the effective systems are derived in Sec.~\ref{sec:derivation}.
Finally, in Sec.~\ref{sec:numerics} we verify the effective systems by numerical computations using high-oder finite elements.
}


\section{{Multiharmonic analysis, multiscale expansion and approximative models}}
\label{sec:Main}

\subsection{Multiharmonic analysis for the nonlinear system}
\label{sec:Multiharmonic}
In many acoustic applications the source is of one single frequency 
$\omega > 0$ and so of 
{the form 
  \begin{equation*}
    \vf(t,\vx) = \frac12 \left( \vf(\vx)\,\exp(-\imag\omega
    t) + \overline{\vf(\vx)} \exp(\imag \omega t)\right),
  \end{equation*}
with $\vf:
  \Omega \to \IC^2$ being complex valued.}
Then, we assume that the solution $(\vv,p)$ 
of~\eqref{eq:navier.stokes:orig} tends to a 
quasi-stationary solution that is periodic in time with a period 
$T=\tfrac{2\pi}{\omega}$\, which we denote by $(\vv,p)$ again, \ie
\begin{equation}
  \label{eq:quasistationar}
 \vv(t+\tfrac{2\pi}{\omega},\vx) = \vv(t,\vx), \quad
 p(t+\tfrac{2\pi}{\omega},\vx) = p(t,\vx).
\end{equation} 
This does not mean in general that one obtains a mono-frequency
solution of the same form as the 
source, but {as the problem involves only linear and quadratic terms its} solution 
can be written as combination of all the harmonics {$\cos(k \omega t), k=0,1,\ldots$ and $\sin(k \omega
t), k=1,2,\dots$. }
\begin{equation}
  \label{eq:ansatz:multiharmonic}
  \begin{aligned}
    \vv(t,\vx) &= \vv_0(\vx) + \frac12 \sum_{k=1}^\infty {\vv}_k(\vx)
    \exp(-\imag k\omega t) + \overline{\vv_k(\vx)} \exp(\imag k\omega t), \\
    p(t,\vx) &= p_0(\vx) + \frac12 \sum_{k=1}^\infty {p}_k(\vx)
    \exp(-\imag k\omega t) + \overline{p_k(\vx)} \exp(\imag k\omega t)\ ,
  \end{aligned}
\end{equation}
which 
is called
\textit{multiharmonic 
ansatz}~\cite{Bachinger.Langer.Schoeberl:2005,Bachinger.Langer.Schoeberl:2006,
Bachinger:2003}.
Inserting expansion~\eqref{eq:ansatz:multiharmonic}
 into the time-dependent problem 
\eqref{eq:navier.stokes:orig} {and identifying the terms
  corresponding to $\exp(-\imag k \omega t)$, $k=0,1,\ldots$ } 
leads to {the infinite} system of non-linear equations in space and frequency domain
\begin{subequations}
\begin{align}
  \label{eq:PDE:freq:PDE}
  {\mathcal{L}_k}(\vV, P)(\vx) + {\mathcal{N}_k}(\vV, P)(\vx) &= 
  \begin{pmatrix} 
    \vf(\vx)\\ 0
  \end{pmatrix}\delta_{k=1} \quad \text{ in }\Omega, 
  \qquad {k=0,1,\ldots}\ ,\\
  {\vV} &= {\zerobf \hspace{6em} \text{ on }\partial\Omega\ ,}
  \label{eq:PDE:freq:bc}
\end{align}
\label{eq:PDE:freq}
\end{subequations}
with the vectors $\vV = \big({\vv}_0, {\vv}_1, \dots\big)^\top$
and $P = \big({p}_0, {p}_1, \dots\big)^\top$,
collecting the coefficients of the {Fourier}
 ansatz~\eqref{eq:ansatz:multiharmonic}, 
the linear differential operators
\begin{align*}
  {\mathcal{L}}_k(\vV, P) &= \begin{pmatrix}
      &-\imag k \omega\vv_k -\nu \Delta\vv_k + \nabla p_k\\[1ex]
      &-\imag k \omega p_k + c^2\Div\vv_k
    \end{pmatrix}\ ,
\end{align*}
and the nonlinear differential operators
  \begin{align*}
    {\mathcal{N}}_0(\vV, P) & = 
    \frac 12 
    \begin{pmatrix}
      ({\vv}_0\cdot\nabla) {\vv}_{0} \\[1ex]
      \Div (\vv_0 \,p_{0})
    \end{pmatrix}
    + \frac 14 \sum_{m=0}^\infty
    \begin{pmatrix}
    ({\vv}_m\cdot\nabla) \overline{\vv_{m}}
    + (\overline{\vv_{m}} \cdot\nabla){\vv}_{m} \\[1ex]
    \Div (\vv_m \,\overline{p_{m}}
    + \overline{\vv_{m}}\, p_m)
    \end{pmatrix} \\
    {\mathcal{N}}_k(\vV, P) & = 
    \frac 12 \sum_{m=0}^k
    \begin{pmatrix}
      ({\vv}_m\cdot\nabla) {\vv}_{k-m} \\[1ex]
      \Div (\vv_m \,p_{k-m})
    \end{pmatrix}
    + \frac 12 \sum_{m=k}^\infty
    \begin{pmatrix}
    ({\vv}_m\cdot\nabla) \overline{\vv_{m-k}}
    + (\overline{\vv_{m-k}} \cdot\nabla){\vv}_{m} \\[1ex]
    \Div (\vv_m \,\overline{p_{m-k}}
    + \overline{\vv_{m-k}}\, p_m)
    \end{pmatrix}, \quad k > 0.
  \end{align*}%
{
  \begin{theorem}
    \label{theo:existence_uniqueness}
    The existence and uniqueness of a 
    quasi-stationary solution $(\vv,p)$ of~\eqref{eq:navier.stokes:orig} 
    satisfying~\eqref{eq:quasistationar}
    is equivalent to the existence and uniqueness of a solution
    $(\vv_0,\vv_1,\dots,p_0,p_1,\dots)$ of~\eqref{eq:PDE:freq}.
  \end{theorem}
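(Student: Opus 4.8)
The plan is to show that the multiharmonic ansatz \eqref{eq:ansatz:multiharmonic} sets up a bijection between $T$-periodic (quasi-stationary) solutions of \eqref{eq:navier.stokes:orig} and solutions of the frequency-domain system \eqref{eq:PDE:freq}. Once such a bijection is established, the equivalence of existence and of uniqueness is immediate, since a bijection maps a nonempty set to a nonempty set and a singleton to a singleton.

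First, for the forward direction I would start from a quasi-stationary solution $(\vv,p)$ satisfying \eqref{eq:quasistationar}. Because the fields are real-valued and $T$-periodic, their temporal Fourier series has coefficients occurring in complex-conjugate pairs, which is exactly the structure displayed in \eqref{eq:ansatz:multiharmonic}; the coefficients are recovered uniquely by the standard formula $\vv_k(\vx) = \tfrac{2}{T}\int_0^T \vv(t,\vx)\,\exp(\imag k\omega t)\,\dt$ for $k\geq 1$ (and with the prefactor adjusted to $\tfrac1T$ for $k=0$), and analogously for $p_k$. Fourier uniqueness --- a $T$-periodic integrable function with all vanishing Fourier coefficients is zero --- guarantees that this coefficient map is injective.

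Next I would substitute \eqref{eq:ansatz:multiharmonic} into \eqref{eq:navier.stokes:orig} and separate by frequency. The linear operators $\partial_t$, $-\nu\Delta$, $\nabla$ and $c^2\Div$ act term-by-term on the series and produce exactly $\mathcal{L}_k$ on the mode $\exp(-\imag k\omega t)$. The two quadratic terms $(\vv\cdot\nabla)\vv$ and $\Div(p\,\vv)$ are products of two Fourier series; expanding these products and regrouping contributions by their time frequency yields Cauchy/convolution sums, and careful bookkeeping of the mixed terms arising from the conjugate half of the series reproduces precisely $\mathcal{N}_0$ and $\mathcal{N}_k$ as stated. Since the exponentials $\{\exp(-\imag k\omega t)\}_{k\in\IZ}$ are orthogonal over one period, the time-domain equation \eqref{eq:navier.stokes:orig:mom}--\eqref{eq:navier.stokes:orig:pr} holds if and only if each mode equation \eqref{eq:PDE:freq:PDE} holds for every $k$, and the no-slip condition \eqref{eq:navier.stokes:orig:bound} for all $t$ is equivalent to $\vv_k=\zerobf$ on $\partial\Omega$ for all $k$, i.e. \eqref{eq:PDE:freq:bc}. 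This shows the coefficient map sends quasi-stationary solutions into solutions of \eqref{eq:PDE:freq}.

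For the converse I would take a solution $(\vv_0,\vv_1,\dots,p_0,p_1,\dots)$ of \eqref{eq:PDE:freq}, define $(\vv,p)$ by the series \eqref{eq:ansatz:multiharmonic}, and run the same computation backwards to check that the reconstructed pair is $T$-periodic and solves \eqref{eq:navier.stokes:orig}. The main obstacle sits here: one must justify that the series converges to an admissible, sufficiently regular time-periodic field and that term-by-term differentiation together with the rearrangement of the quadratic products into the convolution sums $\mathcal{N}_k$ are legitimate in the relevant function spaces. This requires either an a priori regularity/decay statement for the coefficients $(\vv_k,p_k)$ or a restriction of the admissible class (for instance finitely many active modes, or a weighted summability condition), so that the Cauchy products are well defined and the formal manipulations become rigorous. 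Granting this, injectivity from the first step and surjectivity from the remaining steps yield the asserted bijection, and the equivalence of existence and uniqueness follows.
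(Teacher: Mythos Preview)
Your approach is essentially the same as the paper's: define the Fourier coefficients by the standard integral formulas to go from time domain to frequency domain, and reconstruct via the series \eqref{eq:ansatz:multiharmonic} for the converse, checking in each direction that the respective system is satisfied. The paper's proof is terser---it outsources the Fourier-uniqueness step to \cite[Lemma~3.3]{Bachinger:2003} and does not address the convergence/regularity issue you flag, so your caveat about justifying term-by-term differentiation and the Cauchy products is a fair observation that the paper itself leaves implicit.
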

  \begin{proof}
     First, let $(\vv,p)$ be the unique solution
    of~\eqref{eq:navier.stokes:orig}. Defining the vector
    $(\vv_0,\vv_1,\dots,p_0,p_1,\dots)$ by
    \begin{subequations}
    \label{eq:coefficients}
    \begin{align}
      & \vv_0(\vx) = \frac{\omega}{2\pi} \int_0^{2\pi/\omega} \vv(t,\vx)
      dt, \quad \vv_k(\vx) = \frac{\omega}{\pi} \int_0^{2\pi/\omega}
      \vv(t,\vx) \exp(\imag k \omega t)
      dt, \quad k \geqslant 1, \\
      & p_0(\vx) = \frac{\omega}{2\pi} \int_0^{2\pi/\omega} p(t,\vx)
      dt, \quad p_k(\vx) = \frac{\omega}{\pi} \int_0^{2\pi/\omega}
      p(t,\vx) \exp(\imag k \omega t)
      dt, \quad k \geqslant 1.
    \end{align}
    \end{subequations}
    the decomposition~\eqref{eq:ansatz:multiharmonic} holds and
    by construction $(\vv_0,\vv_1,\dots,p_0,p_1,\dots)$ is solution
    of~\eqref{eq:PDE:freq}. Moreover, \cite[Lemma~3.3]{Bachinger:2003} implies that the vector is unique.
  
    Now, let $(\vv_0,\vv_1,\dots,p_0,p_1,\dots)$ be the unique solution of~\eqref{eq:PDE:freq}.
    It is easy to see that $(\vv,p)$ defined by~\eqref{eq:ansatz:multiharmonic} 
    satisfies~\eqref{eq:navier.stokes:orig}. %
    If~\eqref{eq:navier.stokes:orig} would admit another solution $(\vw,q)$, then its coefficients
    $\vw_k$, $q_k$ defined similarly as in~\eqref{eq:coefficients} would fulfil~\eqref{eq:navier.stokes:orig}
    as well, which is a contradiction to the assumption of unicity.
  \end{proof}
}



\subsection{Asymptotic ansatz 
for small sound amplitude and viscosity}
\label{sec:Asymptotic}
To investigate acoustic velocity $\vv$ and acoustic pressure $p$ for small acoustic excitation and 
viscosity we introduce a small parameter $\eps \in \IR^+$ and replace the acoustic source 
$\vf$ by {$\eps^2\,\sum_{j=0}^\infty \eps^j\vf_j$, where each term $\vf_j$ is independent of $\eps$,} and the viscosity $\nu$ by $\eps^2\nu_0$ with $\nu_0 
\in \IR^+$.
Moreover, we consider the leading order source term $\eps^2\,\vf_0$ to be $\scurl$-free, \ie\  
$\scurl\vf_0 = 0$,
having in mind that $\vf_0 = \nabla p_0$ where the pressure $p_0$ corresponds to a solution of a linear and inviscid wave equation.
In addition, we assume for simplicity the source to disappear on the boundary.
The impedance boundary conditions with additional terms due to more general source functions will be given in the Appendix~\ref{sec:appendix:IBC:source}.

For these small acoustic excitations the leading part of the solution satisfies a linear equation in 
the whole domain as considered in~\cite{Schmidt.Thoens.Joly:2014} and the nonlinearity will come 
into play on a higher order. The small viscosities on the other hand leads to boundary layers 
whose thickness becomes proportional to~$\eps$. Indicating their dependency on~$\eps$ we will 
label the acoustic velocity $\vv^\eps$ and the acoustic pressure $p^\eps$ with a 
superscript~$\eps$. 
They are described by the system
\begin{align}
\label{eq:navier.stokes:eps}
  {\mathcal{L}^\eps_k}(\vV^\eps, P^\eps)(\vx) + {\mathcal{N}_k}(\vV^\eps,P^{\eps})(\vx) = 
  \eps^2\, 
  \begin{pmatrix}
     \vf_0(\vx) + \eps\vf_1(\vx) + \eps^2\vf_2(\vx)  \\
     0 
  \end{pmatrix}\delta_{k=1},
\end{align}
{with the vectors $\vV^\eps = \big({\vv}^\eps_0, {\vv}^\eps_1, \dots\big)^\top$,
$P^\eps = \big(p^\eps_0, p^\eps_1, \dots \big)^\top$ of velocity and pressure coefficients}
and {the} linear differential operators 
\begin{align*}
\mathcal{L}^\eps_k({\vV}^\eps, P^\eps) = \left(\begin{aligned}
                   &-\imag k \omega\vv^\eps_k -\eps^2\nu_0 \Delta\vv^\eps_k + \nabla p^\eps_k\\ 
                   &-\imag k \omega p^\eps_k + c^2\Div\vv^\eps_k
                  \end{aligned}\right)\ .
\end{align*}
{In the following we specify first the domain and its boundary before we introduce the ansatz for 
an asymptotic expansion with far field terms and near field correctors and their coupling conditions.}

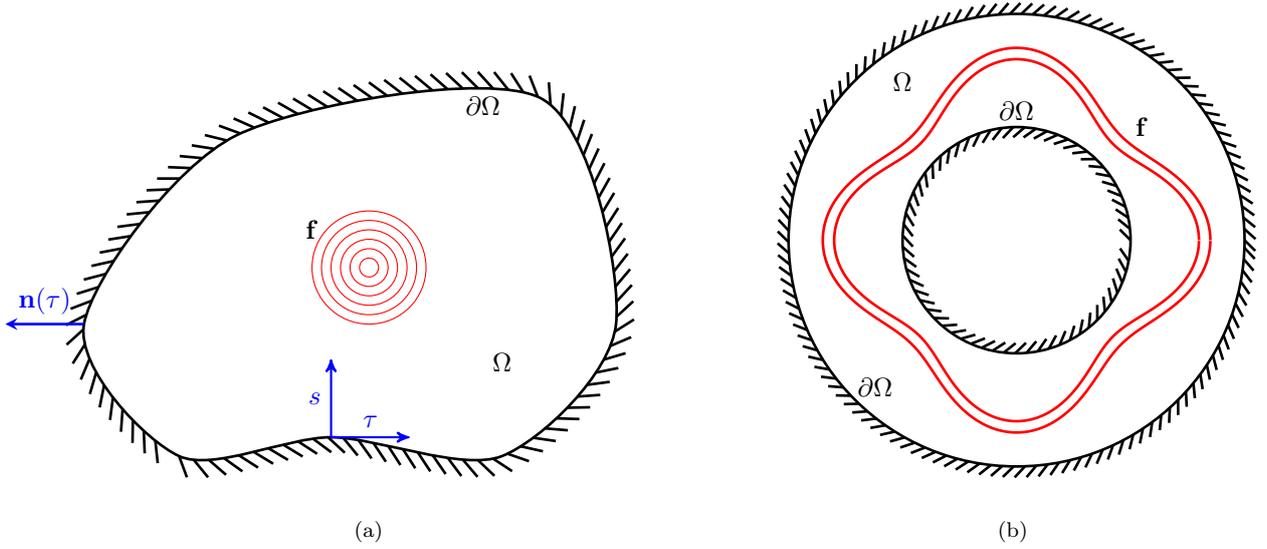
\begin{figure}[hbt]
\centering
\subfigure[]
{
        \label{fig:coordinate}
  \begin{tikzpicture}[scale=2.5,allow upside down,
    interface/.style={
        postaction={draw,decorate,decoration={border,angle=-45,
                    amplitude=0.3cm,segment length=2mm}}},
     ]
    \draw [line width=1pt,interface] plot [smooth cycle] coordinates 
    {(0,0) (0.5,-0.7) (1.3,-0.6) (2.2,-0.7) (2.8,0) (2.4,1.2) (0.8,1)}
    node at (2.2,-0.2) {$\Omega$}
    node at (2.1,1.16) {$\partial \Omega$}
    node at (1.3,-0.6) [sloped,inner sep=0cm,above,anchor=south west,
    minimum height=1cm,minimum width=1cm](N){}
    node at (0,0) [sloped,inner sep=0cm,above,anchor=north east,
        minimum height=1cm,minimum width=1cm](N2){};
    \path (N.south west)%
           edge[-stealth',blue,thick] node[left] {$ s$} (N.north west)
           edge[-stealth',blue,thick] node[above] {$ \td$} (N.south east);
    \path (N2.north west)%
           edge[stealth-,blue,line width=1pt] node[above] {$\vn(\td)$} (N2.north east);
    \foreach \ll in {1, ..., 6} { 
    \draw[color=red] (1.5,0.3) circle (\ll/20);
    };
    \draw (1.2,0.5) node {$\vf$};
\draw [color = white] (3,0) rectangle (3.5,1);
  \end{tikzpicture} 
}
\subfigure[]{
  \label{fig:torus}
  \begin{tikzpicture}[scale=3,allow upside down,
    interface/.style={
        postaction={draw,decorate,decoration={border,angle=45,
                    amplitude=0.2cm,segment length=1.5mm}}},
    interfaceout/.style={
        postaction={draw,decorate,decoration={border,angle=45,
                    amplitude=-0.2cm,segment length=1.5mm}}}
     ]
    \draw[line width=1pt,interfaceout] (0, 0) circle (1);
    \draw[line width=1pt,interface] (0, 0) circle (0.5);
    \draw (-0.5,0.7) node {$\Omega$};
    \draw (0.0,0.57) node {$\partial \Omega$};
    \draw (-0.62,-0.65) node {$\partial \Omega$};
    \draw[line width=1pt, color=red,domain=0:6.28,samples=200,smooth] 
          plot (canvas polar cs:angle=\x r,
            radius= {0.75cm+0.1cm*cos(4*\x r)}); 
    \draw[line width=1pt, color=red,domain=0:6.28,samples=200,smooth] 
          plot (canvas polar cs:angle=\x r,
            radius= {0.7cm+0.1cm*cos(4*\x r)}); 
    \draw (0.55,0.5) node {$\vf$};
  \end{tikzpicture}
}
  \caption{(a) Definition of a general domain with a local coordinate system $(\td,s)$ 
close to the wall; (b) Definition of an annulus domain for numerical simulations.}
\end{figure}

\smallskip
\paragraph
{\em The geometrical setting}
Let $\Omega \subset \IR^2$ be a bounded domain with smooth boundary
$\partial\Omega$. %
The boundary shall be described by a mapping
$\vx_{\partial\Omega}: \td \in \Gammat\to \IR^2$ from a one-dimensional reference domain $\Gammat 
\subset \IR$. %
We assume the boundary to be $C^\infty$ such that 
points {in some neighbourhood $\Omega_\Gamma$ of $\partial\Omega$}
 can be uniquely written as
\begin{align}
  \label{eq:localCoord}
  \vx(\td,s) = \vx_{\partial\Omega}(\td) - s {\vn(\vx_{\partial\Omega}(\td))}
\end{align}
where $\vn$ is the outer normalised normal vector and $s$ the
distance from the boundary. %

Without loss of generality we assume $|\vx_{\partial\Omega}'(\td)| =
1$ for all $\td \in \Gammat$. %
The orthogonal unit vectors in these tangential and normal coordinate
directions are $\ve_\td = -\vn^\bot$, where we use the notation
$\vu^\bot = (u_2, -u_1)^\top$ for a turned vector clockwise by 
$90^\circ$, and $\ve_s = -\vn$. %
{This allows us to write the tangential derivative $\nabla u(\vx)\cdot \ve_\td$ of a function $u \in C^1(\Omega)$ 
with abuse of notation as
\begin{align}
  \label{eq:tangentialderivative}
  (\partial_\td u)(\vx) := \partial_\tau u(\vx_{\partial\Omega}(\td)).
\end{align}
}
Moreover, the curvature {$\kappa$} on the boundary $\partial\Omega$ is given by
\begin{align*}
  \kappa(\vx_{\partial\Omega}(\td)) := 
  \frac{x_{\partial\Omega,1}'(\td)x_{\partial\Omega,2}''(\td) - x_{\partial\Omega,2}'(\td)x_{\partial\Omega,2}''(\td)}{(x_{\partial\Omega,1}'(\td)^2+x_{\partial\Omega,2}'(\td)^2)^{3/2}}\ .
\end{align*}

\paragraph
{\em Asymptotic ansatz.}
Within this article we consider the acoustic source of the same order as the boundary layer,
\ie\ $\vf = \vf^\eps = O(\eps^2)$. {In the linear model the resulting acoustic velocity 
and pressure are of the same order and for the considered nonlinear model the same is true.}
The solution {$\vV^\eps$, $P^\eps$ of \eqref{eq:navier.stokes:eps}} should be approximated by a
two-scale asymptotic expansion in the framework of Vishik and
Lyusternik~\cite{Vishik.Lyusternik:1960} {and for each coefficient we take the ansatz} 
\begin{align}
  \label{eq:asympExpan}
  \vv_k^\eps \sim 
  \sum_{j=0}^{\infty} \eps^{j+2}\left(\vv_k^j (\vx) +
    {\vv^j_{\BL,k}}(\vx)\right), \qquad 
  p_k^\eps \sim 
  \sum_{j=0}^{\infty} \eps^{j+2}\left(p_k^j(\vx) + {p^j_{\BL,k}}(\vx)\right)\ ,
  && {\text{for }\eps\to 0}\ ,
\end{align}  
where $\vv^j_k$ and $p^j_k$ are the \textit{far field} {velocity and pressure of order~$j$} 
and $\vv^j_{\BL,k}$ and $p^j_{\BL,k}$ represent {the respective} \textit{near field} velocity and 
pressure. {They are seeked in scaled coordinate $S(s)=\tfrac{s}{\eps}$ of the} local normalised 
coordinate system~\eqref{eq:localCoord} in the form
\begin{subequations} 
\label{eq:BL:expan}
  \begin{align} 
    {\vv^j_{\BL,k}}(\vx) &= \Phi^j_{k,\td} (\td,\tfrac{s}{\eps})\ \ve_\td(\td) +
    {\Phi^j_{k,s}}(\td,\tfrac{s}{\eps})\ \ve_s(\td) \\
    {p^j_{\BL,k}}(\vx) &= \Pi^j_k(\td,\tfrac{s}{\eps})
  \end{align} 
\end{subequations}
{taking into account the fact that} the boundary layer thickness scales {linearly} with 
$\eps$. For the desired decay properties we require the near field terms 
$\Phi^j_{k,\td}(\tau,S)$, $\Phi^j_{k,s}(\tau,S)$ and $\Pi^j_k(\tau,S)$ as well as 
their higher derivatives to vanish with $S \to \infty$.
The subscript $\cdot_\BL$ stands for ``boundary layer'' expressing
the nature of the near field terms that they are essentially defined in a small layer close the 
boundary.
Indeed the equality~\eqref{eq:BL:expan} can be assumed to be true only in an $O(1)$ neighbourhood 
of the boundary in which the local coordinate system~\eqref{eq:localCoord} is defined. Outside this 
neighbourhood the expression on the right hand sides of~\eqref{eq:BL:expan}, that decaying 
exponentially in an $O(\eps)$ distance, is multiplied with a smooth cut-off function such that the 
product is exactly zero where the local coordinate system is not defined.


In the linear case~\cite{Schmidt.Thoens.Joly:2014} the near field velocity turned out to be 
divergence free such that there is no boundary layer for the pressure. Due to the coupling of the 
velocity and pressure by the nonlinear terms {this property can not be assumed in general. 
However, we will see in our analysis that the near field} pressure terms $\Pi^j_{\BL,k}$ vanish at 
least up to order 2 and {up to this order} the resulting near field terms for the frequency 
$\omega$ of the excitation remain exactly the same as for the linear system. 


\smallskip
\paragraph{\em Coupling of far and near field by the no-slip boundary conditions}
By the homogeneous Dirichlet boundary condition %
the tangential trace {$\big(\vv^{\eps}_k + \vv^{\BL}_k\big)\cdot\ve_\td$} and %
    normal     trace {$\big(\vv^{\eps}_k + \vv^{\BL}_k\big)\cdot\vn$} %
vanish for any $k$ and {separately in the orders} in $\eps$, 
cf~\eqref{eq:navier.stokes:orig:bound}, therefore the traces of the far field have to 
fulfil the conditions
\begin{subequations}
\label{eq:coupling}
\begin{align}
  {\Phi^j_{k,\td}}(\td,0) = -{v^j_{k,\td}}(\td) &:= -\vv^{j}_k(\vx(\td,0))\cdot\ve_\td\ , 
\\
  {\Phi^{j}_{k,s}}(\td,0) = 
-{v^{j}_{k,s}}(\td) &:= \hspace{0.8em}\vv^{j}_k({\vx(\td,0)})\cdot\vn\ .
\end{align} 
\end{subequations}

The far and near field terms will be derived order by order up order~2 in Section~\ref{sec:derivation} %
as well as the effective systems with impedance boundary conditions that we will present already in the %
following subsection.



\subsection{Effective systems with impedance boundary conditions}
\label{sec:ModelDef}

In this section we present effective models of order {$N = 0$, $1$ and $2$ 
for approximative far field solutions %
{$\vV^{\eps,N} = (\vv^{\eps,N}_0, \vv^{\eps,N}_1, \ldots)^\top$, %
$P^{\eps,N} = (p^{\eps,N}_0, p^{\eps,N}_1, \ldots)^\top$}, 
in which the nonlinear and viscous behaviour in the layers close to the boundary
are incorporated with 
impedance boundary conditions. 
The steps for deriving the systems for approximative velocity and pressure
will follow in Section~\ref{sec:derivation:effectiveSystems}. 
Contrary to the original system~\eqref{eq:navier.stokes:orig} in time domain
or its multiharmonic approximation~\eqref{eq:navier.stokes:eps}, for which all modes
of velocity and pressure couple, the approximative pressure and velocity coefficients
decouple for all modes $k > 0$. Therefore, we introduce separately systems for 
pressure coefficients $p^{\eps,N}_k$ only, where associated velocity coefficients $\vw^{\eps,N}_k$
are defined afterwords as a function of the pressure, and systems for velocity coefficients 
$\vv^{\eps,N}_k$ only, where associated pressure coefficients $q^{\eps,N}_k$ follow directly. %
Only for the static mode $k = 0$ we have coupled velocity and pressure systems. %
In general, the directly defined pressure coefficients $p^{\eps,N}_k$ and the pressure coefficients
$q^{\eps,N}_k$ computed from the velocity may differ as well as the two velocity approximations 
$\vv^{\eps,N}_k$ and $\vw^{\eps,N}_k$. We also distinguish the two for the static mode $k = 0$
even so here velocity and pressure coefficients are defined in a coupled system as the right 
hand side of this system depends on the different approximations.

Derived from the asymptotic expansion, both approximative far field solutions for velocity and 
pressure order $N$ shall be close to the respective far field expansion of order $N$, \ie\ for the} 
frequency mode $k$ {we expect that} 
\begin{align}
 \label{eq:appr:form}
  \begin{pmatrix}
    \vv^{\eps,N}_k\\
    q^{\eps,N}_k
  \end{pmatrix},
  \begin{pmatrix}
    \vw^{\eps,N}_k\\
    p^{\eps,N}_k
  \end{pmatrix}
   = \sum_{j=0}^N \eps^{j+2}
   \begin{pmatrix}
     \vv^j_k \\
     p^j_k
    \end{pmatrix} + O(\eps^{N+3})\ . 
\end{align}

Even equally important the asymptotic regime of small sound amplitudes leads to an iterative procedure
to obtain the coefficients for different modes $k$ (see Table~\ref{tab:approximateModels}).
In general, the coefficients for $k = 1$ can be defined independently and the neighbouring modes for $k = 0$
and $k = 2$ follow. Moreover, up to order $2$ there are no modes for $k > 2$ as 
indicating that the response at the higher harmonics $3\,\omega, 4\,\omega, \ldots$
are more than two orders in $\eps$ smaller than the excitation amplitude. %
In general, for approximation of order $N$ we have only the modes $k = 0, 1,\ldots, \lceil \frac{N+1}{2}\rceil$.

\begin{table}
  \centering
  \begin{tabular}{c@{\hspace{3em}}ccccc c@{\hspace{4em}}ccccc}
    order & \multicolumn{5}{c}{pressure} && \multicolumn{5}{c}{velocity}\\
    \hline\\[-0.8em]
    $N = 0$ & & & $p^{\eps,0}_1$ & & & & $\vv^{\eps,0}_0$ & $\Longleftarrow$ & $\vv^{\eps,0}_1$ \\
    $N = 1$ & & & $p^{\eps,1}_1$ & & & & $\vv^{\eps,1}_0$ & $\Longleftarrow$ & $\vv^{\eps,1}_1$ \\
    $N = 2$ & $p^{\eps,2}_0$ & $\Longleftarrow$ & $p^{\eps,2}_1$ & $\Longrightarrow$ & 
$p^{\eps,2}_0$ &
            & $\vv^{\eps,2}_0$ & $\Longleftarrow$ & $\vv^{\eps,2}_1$ & $\Longrightarrow$ & $\vv^{\eps,2}_2$\\[0.4em]
    \hline\\[-0.8em]
  \end{tabular}
  \caption{Pressure approximations $p^{\eps,N}_k$ and velocity approximations $\vv^{\eps,N}_k$ 
           can be computed separately from each other (except for static mode for $k = 0$) and
           sequentially in the mode index~$k$. Then, velocity approximations
           $\vw^{\eps,N}$ are directly deduced from $p^{\eps,N}_k$ and pressure approximations
           $q^{\eps,N}$ from $\vv^{\eps,N}$. The velocity and pressure approximations for 
           the excitation frequency ($k = 1$) coincides with the respective solutions 
           for the linear case up to order~1 (see~\cite{Schmidt.Thoens:2014}).}
  \label{tab:approximateModels}
\end{table}

\subsubsection{Systems for the pressure}
\label{sec:ModelDef:pressure}

{Here we present the approximative models for the far field
pressure. This is different to the original equations in which no boundary
conditions for the pressure, but for both velocity components, are
imposed for each order. Approximative velocities can be computed {\em a-posteriori}
{(see Sec.~\ref{sec:ModelDef:pressure:velocityPostProc})}. %

\medskip
\paragraph{\bf\em Order $O(\eps^2)$}
The approximative model for the pressure in frequency of the 
excitation $1\cdot\omega$ is given by a linear system
\begin{subequations}
\label{eq:pappr:0}
\begin{align}
  \label{eq:pappr:01:PDE}
  \Delta {p}^{\eps,0}_1 + \frac{\omega^2}{c^2} {p}^{\eps,0}_1 &= \Div \vf,
    && \text{in }\Omega,  \\
  \label{eq:pappr:01:bc}
  \nabla {p}^{\eps,0}_1 \cdot \vn  &= 0,    && \text{on }\partial\Omega,
\end{align}
\end{subequations}
{All the terms $p_k^{\eps,0}$ for $k \neq 1$ are 
zero, meaning that the limit acoustic pressure is exactly as in the 
linear case.}

\medskip
\paragraph{\bf\em Order $O(\eps^3)$}
The approximative model in frequency $1\cdot\omega$ is given by the linear system
\vspace{-0.3em}
\begin{subequations}
\label{eq:pappr:1}
\begin{align}
  \label{eq:pappr:11:PDE}
  \Delta {p}^{\eps,1}_1 + \frac{\omega^2}{c^2} {p}^{\eps,1}_1 &= \Div \vf,
    && \text{in }\Omega,  \\
  \label{eq:pappr:11:bc}
  \nabla {p}^{\eps,1}_1 \cdot \vn + (1+\imag)\sqrt{\frac{\nu}{2\omega}}\partial_\td^2 
  {p}^{\eps,1}_1  &= 0,    && \text{on }\partial\Omega.
\end{align}
\end{subequations}
Again, all the terms $p_k^{\eps,1}$ for $k \neq 1$ are 
zero and the resulting acoustic pressure approximation {is} exactly as in the 
linear case. 
The impedance boundary conditions~\eqref{eq:pappr:11:bc}
are of Wentzell type. %
See \cite{BonnaillieNoel.Dambrine.Herau.Vial:2010,Schmidt.Heier:2015}
for the functional framework and variational formulation. %

\medskip
\paragraph{\bf\em Order $O(\eps^4)$}
For frequency $1\cdot\omega$ the pressure of order 2 is solution of 
\begin{subequations}
  \label{eq:pappr:2}
  \begin{align}
    \label{eq:pappr:12:PDE}
    \Big(1- \frac{\imag\omega\nu}{c^2}\Big)\Delta {p}^{\eps,2}_1 +
    \frac{\omega^2}{c^2}{p}^{\eps,2}_1 &= \Div \vf,
    && \text{in }\Omega,  \\
    \label{eq:pappr:12:bc}
    \nabla {p}^{\eps,2}_1 \cdot \vn + (1+\imag)\sqrt{\frac{\nu}{2\omega}}
    \partial_\td^2 {p}^{\eps,2}_1 
    +\frac{\imag\nu}{2\omega}\partial_\td(\kappa\partial_\td{p}^{\eps,2}_1) &= 0,
    && \text{on }\partial\Omega.
  \end{align}
\end{subequations}
Even for $N=2$ the nonlinear terms do not affect the {pressure} approximation in 
frequency of excitation, which {coincides with the approximations in} the linear case
{and are also obtained via a system decoupled from}
the velocity. 
However, in this order {of approximation the first time}
other frequency modes come into play, namely that for the}
frequency $0 \cdot\omega$, a so called acoustic 
streaming~\cite{Lighthill:1978}, {and for the frequency $2\cdot\omega$.}
The acoustic pressure {at frequency $0 \cdot\omega$} is explicitly defined by {the algebraic equation}
\begin{align}
    \label{eq:pappr:02}
 {p}^{\eps,2}_0 = - \frac{1}{4\omega^2} \big|\vf - \nabla {p}^{\eps,2}_1\big|^2
\end{align}
and {the one at} frequency $2\cdot\omega$ by {the Helmholtz equation}
\begin{subequations}
  \label{eq:pappr:22}
  \begin{align}
    \label{eq:pappr:22:PDE}
    \Delta {p}^{\eps,2}_2 + \frac{4\,\omega^2}{c^2} {p}^{\eps,2}_2  &= 
  {\frac{1}{4\omega^2} \Delta \big(\vf - \nabla p^{\eps,2}_1 \big)^2 }
  + \frac{1}{c^2} \left( \big(\vf - \nabla p^{\eps,2}_1\big) \cdot \nabla 
  p^{\eps,2}_1 {+} \frac{\omega^2}{c^2} \big(p^{\eps,2}_1\big)^2 \right),
    && \text{in }\Omega, \\
    \label{eq:pappr:22:bc}
    \nabla{p}^{\eps,2}_2\cdot\vn &= 0,    && \text{on }\partial\Omega. 
  \end{align}
\end{subequations}
For a well-posed definition of $p^{\eps,2}_2$
the source function $\vf$ has to be continously differentiable 
and also the pressure approximation $p^{\eps,2}_1$ needs higher regularity. 
{Note, that the right hand side of~\eqref{eq:pappr:22:PDE} can be simplified using the fact that $p^{\eps,2}_1$
is solution to an Helmholtz problem (see Appendix~\ref{sec:appendix:p22}).}
For a numerical approximation with $C^0$-continuous finite elements, for which this regularity 
is only attained approximately, the right hand side can be evaluated 
as the projection of the pressure gradient $\nabla p^{\eps,2}_1$ to continuous vector fields.

\subsubsection{Post-processing of velocity from systems for the pressure}
\label{sec:ModelDef:pressure:velocityPostProc}

When the far field pressure is computed we may obtain {\em a-posteriori} 
{approximations to} the far field velocity {to the respective} order. 

{The far field velocities at frequency $1\cdot\omega$ are defined at the different approximation orders by}
\begin{align}
\label{eq:wappr:k=1}
 {\vw}^{\eps,0}_1 &= \frac{\imag}{\omega}\left(\vf - \nabla {p}^{\eps,0}_1\right),
 &
 {\vw}^{\eps,1}_1 &= \frac{\imag}{\omega}\left(\vf - \nabla {p}^{\eps,1}_1\right),
 & 
  {\vw}^{\eps,2}_1 &= \frac{\imag}{\omega}\left(\vf - \nabla {p}^{\eps,2}_1\right)
  - \frac{\nu}{c^2}\nabla {p}^{\eps,2}_1.
\end{align}
{and those at frequency $2\cdot\omega$ at order $2$ by}
\begin{align}  
  \label{eq:wappr:k=2}
  {\vw}^{\eps,2}_2 &= -\frac{\imag}{2\,\omega}\left(
    \nabla {p}^{\eps,2}_2 + 
    \frac{1}{4\,\omega}\nabla \big| \vf - \nabla {p}^{\eps,2}_1\big|^2\right). 
\end{align}

For the frequency $0\cdot\omega$ a far field velocity approximation $\vw^{\eps,1}_0$ of order~1
can be obtained as solution of linear Stokes system similarly to~\eqref{eq:vappr:1:k=0}
in the following subsection that is directly for a velocity approximation, however, using $\vw^{\eps,1}_1$ on its right hand side. 
Likewise, a far field velocity approximation $\vw^{\eps,2}_0$ for order~2 can be defined
by a nonlinear Navier-Stokes like system as~\eqref{eq:vappr:2:k=0} that depends on $\vw^{\eps,2}_1$.

The far field velocity can be used as approximation away from the
boundary and has to be corrected by a near field velocity {approximation (see Sec.~\ref{sec:ModelDef:nearfieldvelocity})}.

\subsubsection{Systems for the velocity}
\label{sec:ModelDef:velocity}

Here we propose approximative models {directly} for the far field velocity.
For each order an approximative pressure can be computed afterwards {(see Sec.~\ref{sec:ModelDef:velocity:pressurePostProc})
as well as a near field velocity approximation (see Sec.~\ref{sec:ModelDef:nearfieldvelocity})}.

\medskip
\paragraph{\bf\em Order $O(\eps^2)$}
The {limit} model is given by a linear system in frequency of excitation
\vspace{-0.3em}
\begin{subequations}
 \label{eq:vappr:0}
  \begin{align}
    \label{eq:vappr:10:PDE}
    \nabla\Div {\vv}^{\eps,0}_1 + \frac{\omega^2}{c^2} {\vv}^{\eps,0}_1 &= 
    \frac{\imag\omega}{c^2} \vf, && \text{ in }\Omega,\\[0.2em]
    \label{eq:vappr:10:bc}
    {\vv}^{\eps,0}_1\cdot\vn &= 0,&& \text{ on }\partial\Omega,
  \end{align}
\end{subequations}%
and all other terms $\vv_k^{\eps,0}$, $k \neq 1$ are 
zero. So the limit acoustic velocity coincides with the one in the 
linear case.

\medskip
\paragraph{\bf\em Order $O(\eps^3)$}
In frequency of excitation the approximative model is given by
\vspace{-0.5em}
\begin{subequations}
 \label{eq:vappr:1}
  \begin{align}
    \label{eq:vappr:11:PDE}
    \nabla\Div {\vv}^{\eps,1}_1 + \frac{\omega^2}{c^2} {\vv}^{\eps,1}_1 &= 
    \frac{\imag\omega}{c^2} \vf, && \text{ in }\Omega,\\[0.2em]
    \label{eq:vappr:11:bc}
    {\vv}^{\eps,1}_1\cdot\vn - 
    (1+\imag)\frac{c^2}{\omega^2}
    \sqrt{\frac{\nu}{2\omega}}
    \partial_\td^2\Div{\vv}^{\eps,1}_1%
    &= 0, && \text{ on }\partial\Omega,
  \end{align}
\end{subequations}
 and {there is a non-zero acoustic streaming velocity at} frequency $0\cdot\omega$ {that satisfies
the Stokes system} 
\begin{subequations}
  \label{eq:vappr:1:k=0}
  \begin{align}
    \label{eq:vappr:01:PDE}
    - \nu \Delta {{\vv}^{\eps,1}_0}  + \nabla q^{\eps,3}_0  &= 
  -{\frac{1}{4}} \left( ({\vv^{\eps,1}_1}\cdot\nabla)\overline{\vv^{\eps,1}_1}
  + (\overline{\vv^{\eps,1}_1}\cdot\nabla) {\vv^{\eps,1}_1}\right),
     && \text{in }\Omega\ ,\\
    \Div {\vv}^{\eps,1}_0 &= 0,  && \text{in }\Omega\ , \\
    \label{eq:vappr:01:bc}
    \vv^{\eps,1}_0 &= \zerobf, && \text{on } \partial\Omega\ .
    \end{align}
  \end{subequations}
{The purely real right hand side of \eqref{eq:vappr:1:k=0} implies that 
its solution $({\vv}^{\eps,1}_0, q^{\eps,3}_0)$ is purely real.
  Note that} $q^{\eps,3}_0$ is not only a Lagrange multiplier but a higher order approximation of the pressure at zero frequency.

\medskip
\paragraph{\bf\em Order $O(\eps^4)$} The approximative model in frequency $1\cdot\omega$ is defined 
by
\begin{subequations}
 \label{eq:vappr:2}
 \begin{align}
   \label{eq:vappr:12:PDE}
    \left(1-\frac{\imag\omega\nu}{c^2}\right)\nabla \Div {\vv}^{\eps,2}_1 + 
    \frac{\omega^2}{c^2} {\vv}^{\eps,2}_1 
    &= \frac{\imag\omega}{c^2} \vf, 
    && \text{ in }\Omega,\\[0.2em]
    \label{eq:vappr:12:bc}
    {\vv}^{\eps,2}_1\cdot\vn - 
    \frac{c^2}{\omega^2}\Big(
    (1+\imag)\sqrt{\frac{\nu}{2\omega}}
    \partial_\td^2\Div{\vv}^{\eps,2}_1%
    &+ \frac{\imag\nu}{2\omega}
    \partial_\td(\kappa\partial_\td\Div {\vv}^{\eps,2}_1)\Big) 
    = 0, && \text{ on }\partial\Omega, 
\end{align}
\end{subequations}
\begin{subequations}
  \label{eq:vappr:2:k=0}
{and} that of frequency $0\cdot\omega$ by {the nonlinear system}
  \begin{align}
    \label{eq:vappr:02:PDE}
    - \nu \Delta {\vv}^{\eps,2}_0 &+ {
  ({\vv^{\eps,2}_0}\cdot\nabla)\vv^{\eps,2}_0} 
    + \nabla q^{\eps,4}_0 = -{\frac{1}{4}\big(
    ({\vv^{\eps,2}_1}\cdot\nabla)\overline{\vv^{\eps,2}_1}+
    (\overline{\vv^{\eps,2}_1}\cdot\nabla){\vv^{\eps,2}_1} \Big)},
     && \text{in }\Omega\\
    \Div {\vv}^{\eps,2}_0 &=  
    {-\frac{1}{4c^2} \big( {\vv}^{\eps,2}_1\cdot \overline{\vf} + \overline{\vv}^{\eps,2}_1\cdot {\vf} \big)},
    && \text{in }\Omega \\
    %
    \label{eq:vappr:02:bc}
    \vv^{\eps,2}_0 &= \zerobf, && \text{on } \partial\Omega
    \end{align}
\end{subequations}
Again, the solution {$({\vv}^{\eps,2}_0,q^{\eps,4}_0)$} of~\eqref{eq:vappr:2:k=0} is purely real
  and $q^{\eps,4}_0$ is a pressure approximation of higher order, where $q_0^{\eps,4} = -\frac{1}{4} \big| {\vv}^{\eps,2}_1 \big|^2 + O(\eps^5)$.
At frequency $2\cdot\omega$ a velocity approximation satisfies the Helmholtz equation
\begin{subequations}
  \label{eq:vappr:22}
  \begin{align}
    \label{eq:vappr:22:PDE}
    \nabla\Div {\vv}^{\eps,2}_2 + \frac{4\omega^2}{c^2} {\vv}^{\eps,2}_2 = 
    &     {- \frac{\imag\omega}{c^2} \nabla \big(\vv^{\eps,2}_1\big)^2}
    - \frac{1}{2c^2} \nabla (\vv^{\eps,2}_1 \cdot \vf)
    + \frac{\imag}{2\omega} \nabla 
    \big(\Div {\vv}^{\eps,2}_1 \big)^2,
    && \text{ in }\Omega\ ,\\[0.2em]
    \label{eq:vappr:22:bc}
    {\vv}^{\eps,2}_2\cdot\vn &= 0, && \text{ on }\partial\Omega\ .
  \end{align}
\end{subequations}

\begin{remark}
 Note, that $({\vv^{\eps,2}_0}\cdot\nabla)\vv^{\eps,2}_0 = 
({\vv^{\eps,1}_0}\cdot\nabla)\vv^{\eps,1}_0  + O(\eps^5)$. Therefor instead of solving the 
nonlinear system~\eqref{eq:vappr:02:PDE}--\eqref{eq:vappr:02:bc} one could, first, find 
${\vv^{\eps,1}_0}$ by solving the linear system~\eqref{eq:vappr:01:PDE}--\eqref{eq:vappr:01:bc}, 
and 
then substitute it in~\eqref{eq:vappr:02:PDE}--\eqref{eq:vappr:02:bc} for ${\vv^{\eps,2}_0}$ in 
the advection term. That will lead again to a linear system.
\end{remark}


\subsubsection{Post-processing of pressure from systems for the velocity}
\label{sec:ModelDef:velocity:pressurePostProc}
When the far field  velocity {approximation} is computed we may obtain {\em a-posteriori} 
{an associated} far field  pressure approximation for the frequencies $1\cdot\omega$ and $2\cdot\omega$.
The approximations for frequency $1\cdot\omega$ are given by
\begin{align}
 \label{eq:pappr:N1}
 {q}^{\eps,N}_1 &= -\frac{\imag c^2}{\omega}\Div {\vv}^{\eps,N}_1, \quad {N=0,1,2}\ , 
\end{align}
{and the approximation of order 2 for frequency $2\cdot\omega$ by}
\begin{align}
 \label{eq:qappr:22}
 {q}^{\eps,2}_2 &= -\frac{\imag c^2}{2\omega}\Div {\vv}^{\eps,2}_2
  -\frac{\imag}{{2}\omega} {\vv}^{\eps,2}_1\cdot \vf 
    {-} \frac{c^2}{{2}\omega^2}  
    \left( \big(\Div {\vv}^{\eps,2}_1 \big)^2 - 
    \frac{\omega^2}{c^2}\big({\vv}^{\eps,2}_1 \big)^2 \right)\ .
\end{align}
Moreover, a pressure approximation of order 2 at frequency $0\cdot\omega$ is given by 
\begin{align}
  {q}^{\eps,2}_0 &= -\frac{1}{4} \big| {\vv}^{\eps,0}_1 \big|^2\ .
\end{align}

\subsubsection{Post-processing of a near field velocity} 
\label{sec:ModelDef:nearfieldvelocity}

Close to the wall the far field velocity {approximations $\vV^{\eps,N}$
have to be corrected by boundary layer} functions in tangential {as well as normal} direction 
\begin{align}
 \label{eq:vappr:BLtau}
  {\vV}^{\BL,N}(\vx) = \chi(\vx) \sum_{\ell=0}^N
  \vH^{\ell}(V^{\eps,N}_\tau)(\vx)\ \mathrm{e}^{-(1-\imag)\sqrt{\frac{\omega}{2\nu}}\, s(\vx)},  \quad
\end{align}
where $\chi$ is an admissible cut-off function (see~\cite{Schmidt.Thoens.Joly:2014})
that takes the constant value $1$ in some subset of $\Omega_\Gamma$, %
$s$ is the distance function to the boundary, \ie\ there exists 
for each point $\vx \in \Omega_\Gamma$ a base point $\vx_{\partial\Omega} \in \partial\Omega$ such that 
$\vx = \vx_{\partial\Omega} + s(\vx)\vn^\bot(\vx_{\partial\Omega})$,
and the operators $\vH^{\ell}: (C^\infty(\partial\Omega))^\infty \to (C^\infty(\Omega_\Gamma))^\infty$ %
with $\vH^{\ell} = (0, \vh^{\ell}_1,\vh^{\ell}_2,\ldots)^\top$
that are acting on the tangential velocity traces $V^{\eps,N}_\tau = (v^{\eps,N}_{0,\tau}, v^{\eps,N}_{1,\tau}, \ldots)^\top$
with $v^{\eps,N}_{k,\tau}(\vx_{\partial\Omega}) := \vv^{\eps,N}_k(\vx_{\partial\Omega})\cdot \vn^\bot(\vx_{\partial\Omega})$
where we note that $v^{\eps,N}_{k,\tau} = 0$ for $k > \lceil \frac{N+1}{2}\rceil$.
Hence, to define $\vV^{\BL,N}$ for $N = 0,1,2$ we state the operators
\begin{subequations}
  \begin{align}
     \vh^{0}_{1}(V^{\eps,N}_\tau)(\vx) &= %
     - v^{\eps,N}_{1,\tau} \vn^\bot\ ,\\[0.2em]
     \vh^{1}_{1}(V^{\eps,N}_\tau)(\vx) &= %
     -\tfrac12 \kappa s(\vx) v^{\eps,N}_{1,\tau} \vn^\bot %
     + (1+\imag) \sqrt{\tfrac{\nu}{2\omega}}\partial_\td v^{\eps,N}_{1,\td}\vn\ ,\\[0.2em]
     \nonumber
     \vh^{2}_{1}(V^{\eps,N}_\tau)(\vx) &= %
     -\tfrac{3}{8} \kappa^2 s(\vx)\, \Big((1+\imag) \sqrt{\tfrac{2\nu}{\omega}}- s(\vx)\Big) 
      v \vn^\bot - (1+\imag)\sqrt{\tfrac{\nu}{2\omega}}\, s(\vx)\, \partial^2_\td v \vn^\bot\\[-0.2em]
     &\hspace{1em} 
     + \tfrac12(1+\imag) \sqrt{\tfrac{\nu}{2\omega}}\left(2\kappa \partial_\td v^{\eps,N}_{1,\tau} + \partial_\td\kappa v^{\eps,N}_{1,\tau}\right)s(\vx) \vn
     + \tfrac{\imag \nu}{2\omega} \partial_\td\kappa v^{\eps,N}_{1,\tau} \vn\ ,
     \\
     \vh^{2}_{2}(V^{\eps,N}_\tau)(\vx) &= - v^{\eps,N}_{2,\tau} \vn^\bot\ .
  \end{align}
\end{subequations}
where we note that $\kappa$, $\vn^\bot$, $\vn$ and $v^{\eps,N}_{1,\tau}$ are functions of the base point $\vx_{\partial\Omega}$ of $\vx$
and $\partial_\td$ is the tangential derivative defined in~\eqref{eq:tangentialderivative}.
\section{Derivation of terms of multiscale expansion and effective systems}
\label{sec:derivation}

In {Sec.~\ref{sec:Asymptotic}} we have introduced the {ansatz of the} two-scale expansion~\eqref{eq:asympExpan}, which expresses an approximation to the exact solution as a 
two-scale \textit{decomposition} into far field terms, modelling the 
macroscopic picture of the solution, which are corrected in the 
neighbourhood of the boundary by near field terms. To separate the two 
scales we use the technique of multiscale expansion as described 
in Sec.~\ref{sec:Asymptotic}, which defines the near field terms in a local 
normalised coordinate system~\eqref{eq:localCoord} such that they 
decay {rapidly} away from the wall and are set to zero where the local 
coordinate system is not defined (using a cut-off function). 
In the following we define the terms of asymptotic 
expansion~\eqref{eq:asympExpan} order by order.

\subsection{Correcting near field}
\label{sec:NearField}
In this section we will give the near field equations and their solutions up to order 2.
{They are derived} such that the near field velocity and 
pressure expansions~\eqref{eq:BL:expan} inserted 
into~\eqref{eq:navier.stokes:eps} leave a residual as small as possible in 
powers of $\eps$ {and that the sum of tangential far and near field velocity 
vanishes} at the boundary.
The general form of the near field equations of any order and in any frequency can be found in the 
Appendix~\ref{sec:appendix:nearfield}. 

\medskip
\paragraph{\bf\em The near field terms of order $O(\eps^2)$.} 
The near field equation for $j=0$ in frequency $1\cdot\omega$ yields
\begin{align*}
 \imag \omega{u}^0_{1,\td} +\nu_0\de{S}^2 {u}^0_{1,\td} &= 0, \\
  \de{S} {u^{0}_{1,s}} &= 0, \\
  \de{S}q^{0}_1 &= 0.
\end{align*}
It is easy to see that its unique solution together with the coupling condition for far and near 
fields~\eqref{eq:coupling} and decay condition for the near field is given by
\begin{subequations}
  \label{eq:nearField:k1:0}
\begin{align}
  u^0_{1,\td}(\td,S) &= -v^0_{1,\td}(\td)\,\mathrm{e}^{-\lambda_0 S}, \quad
  \text{with }  \lambda_0 = (1-\imag)\sqrt{\omega/2\nu_0}, \\
  u^{0}_{1,s}(\td,S) &= 0, \\
  q^{0}_1(\td,S) &= 0.
\end{align}
\end{subequations}
This is the dominating boundary layer term close to the wall. 

\medskip
\paragraph{\bf\em The near field terms of order $O(\eps^3)$.} 
The near field equations for $j=1$ in frequency $1\cdot\omega$ are given by
\begin{align*} 
  \imag \omega{u}^1_{1,\td} +\nu_0\de{S}^2 {u}^1_{1,\td}
   &= \kappa\big(3\,\imag \omega S + 3\nu_0 S \de{S}^2 + 
  \nu_0\de{S}\big) {u}^0_{1,\td}, \\
  \de{S} {u^{1}_{1,s}} &= -\de{\td} u^{0}_{1,\td}, \\
  \de{S}q^{1}_1 &= 0,
\end{align*} 
which unique solution, using the terms in~\eqref{eq:nearField:k1:0} together 
with the coupling condition, is
\begin{subequations}
\label{eq:nearField:k1:1}
\begin{align}
  u^1_{1,\td}(\td,S) &= -\left(v^1_{1,\td}(\td) +\tfrac{1}{2}\kappa S\, 
v^0_{1,\td}(\td)\right)\,\mathrm{e}^{-\lambda_0 S}, \\
  u^1_{1,s} (\td,S) &= -\frac{1}{\lambda_0}\,\partial_\td 
    v^0_{1,\td}(\td)\,\mathrm{e}^{-\lambda_0 S}, \\
  q^1_{1} (\td,S) &= 0\ .
\end{align}
\end{subequations}

\paragraph{\bf\em The near field terms of order $O(\eps^4)$.} 
The near field equations for $j=2$ in frequency $1\cdot\omega$ are given by
\begin{align*} 
  \imag \omega{u}^2_{1,\td} +\nu_0\de{S}^2 {u}^2_{1,\td}
   &= \kappa\big(3\,\imag	 \omega S + 3\nu_0 S \de{S}^2 + 
  \nu_0\de{S}\big) {u}^1_{1,\td} \\ 
  &-\nu_0\de{\td}^2u^0_{1,\td} -\kappa^2 \big(3\,\imag \omega S^2 + 
    3\nu_0 S^2\de{S}^2  + \nu_0 (2 S\de{S} -1) \big) u^0_{1,\td}, \\
  \de{S} {u^{2}_{1,s}} &= -\de{\td} u^{1}_{1,\td} + 
  \kappa (S\de{S} {u^{1}_{1,s}} + {u^{1}_{1,s}}), \\
  \de{S}q^{2}_1 &= 0,
\end{align*} 
which unique solution, using the terms in~\eqref{eq:nearField:k1:0} 
and~\eqref{eq:nearField:k1:0} together with the coupling condition, is
\begin{align}  
\label{eq:nearField:k1:2}
  u^2_{1,\td}(\td,S) &= -\left(v^2_{1,\td}(\td) + \tfrac12\kappa S v^1_{1,\td}(\td) 
   -\frac{3\kappa^2 S}{8}\, \left(\frac{1}{\lambda_0}-S\right) v^0_{1,\td}(\td) + 
\frac{S}{2\lambda_0} \partial^2_\td v^0_{1,\td}(\td)
  \right)\,\mathrm{e}^{-\lambda_0 S}, \\
  u^2_{1,s}   (\td,S) &= -\frac{1}{\lambda_0} \left(\de{\td}v^1_{1,\td}(\td) + 
\frac{\kappa}{2}
   \Big(3S+\frac{1}{\lambda_0}\Big)\de{\td}v_{1,\td}^0(\td)
     + \frac{\kappa'}{2}
   \Big(S+\frac{1}{\lambda_0}\Big)v_{1,\td}^0(\td) \right)\,\mathrm{e}^{-\lambda_0 S}, \\
  q^2_1 &= 0.
\end{align}

In frequency $2\cdot\omega$ the first non trivial terms appear for $j=2$ with 
the near field equations given by
\begin{align*}
 2\imag \omega{u}^2_{2,\td} +\nu_0\de{S}^2 {u}^2_{2,\td} &= 0, \\
  \de{S} {u^{2}_{2,s}} &= 0, \\
  \de{S}q^{2}_2 &= 0.
\end{align*}
Its unique solution together with the coupling condition is given by
\begin{subequations}
  \label{eq:nearField:k2:2}
\begin{align}
  u^2_{2,\td}(\td,S) &= -{v^2_{2,\td}}(\td)\,
    \mathrm{e}^{-\sqrt{2}\lambda_0 S},  \\
  u^{2}_{2,s}(\td,S) &= 0, \\
  q^{2}_2(\td,S) &= 0.
\end{align}
\end{subequations}

For frequency $0\cdot\omega$ the unique solution is the trivial solution
at least up to order $j=2$, \ie\ the boundary layer disappears.

\subsection{Far field velocity terms}
\label{sec:far-field-velocity}


In the following section we will derive the terms of asymptotic expansion 
for the far field velocity up to order {2}. The resulting 
expressions in frequency $1\cdot\omega$ are exactly the expressions for the 
linear case which {are} derived and analysed in~\cite{Schmidt.Thoens.Joly:2014}. 
The expressions for frequencies $0\cdot\omega$ and $2\cdot\omega$ are only due to the nonlinear
advection term and do not appear for the linear case.
{The general form of the far field equations of any order and in any frequency can be found in the 
Appendix~\ref{sec:appendix:farfield}. }

{
\paragraph{\bf\em Approximation of order $O(\eps^2)$.}
The limit model for the far field velocity in frequency $1\cdot\omega$ is given 
by
\begin{subequations}
  \label{eq:v:0:w1}
    \begin{align}
    \nabla \Div {\vv}^0_1  +\frac{\omega^2}{c^2} {\vv}^0_1  &=  \frac{\imag\omega}{c^2} \vf_0, %
    && \text{ in }\Omega,
    \label{eq:v01:wave} \\
    {\vv}^0_1 \cdot \vn &= 0
    , && \text{ on }\partial\Omega,
    \label{eq:v01:bc}
   \end{align}
\end{subequations}
The far field approximation for frequency $0\cdot\omega$ is given by the stationary incompressible 
Navier-Stokes equations
\begin{subequations}
  \label{eq:v:0:w0}
  \begin{align}
  \label{eq:pv:00}
    \nabla {p}^2_0 +({\vv}^0_0\cdot\nabla){\vv}^0_0
    - \nu_0 \Delta {\vv}^0_0 &= {-\frac{1}{4} \nabla {|\vv^0_1|}^2}
    && \text{in }\Omega \\
    \Div {\vv}^0_0 &= 0 && \text{in }\Omega \\
    \vv^0_0 &= \zerobf, && \text{on } \partial\Omega, 
  \end{align}
  which exhibit a non-linear convection term.  {Here, we have used
    that $\vv_0^0$ is real valued and
    $({\vv}^0_1\cdot\nabla)\overline{\vv^0_1} +
    (\overline{\vv^0_1}\cdot\nabla){\vv}^0_1 = \nabla {|\vv^0_1|}^2$
  since $\scurl\vv^0_1 = 0$}. We see that the unique solution
of~\eqref{eq:v:0:w0} is given by
\begin{align}
   \vv^0_0 &= 0\ , && p^2_0 = - \frac{1}{4}|\vv^0_1|^2\ .
\end{align}
{The stationary limit velocity $\vv^0_0$ vanishes. Note, 
} %
that the stationary pressure terms of order~0 and 1 vanish {as well}, \ie\ $p^0_0 =  p^1_0 = 0$. 
\end{subequations}


\paragraph{\bf\em Approximation of order $O(\eps^3)$.}
The first correcting terms, \ie\ for $j=1$, for frequency $1\cdot\omega$ are 
given by
\begin{subequations}
\label{eq:v:1:w0}
\begin{align}
  \nabla \Div {\vv}^1_1  +\frac{\omega^2}{c^2} {\vv}^1_1  &=  \frac{\imag\omega}{c^2} \vf_1, %
    && \text{ in }\Omega,
    \label{eq:v11:wave} \\    %
    \vv^1_1\cdot\vn &= (1+\imag)\sqrt{\frac{\nu_0}{2\omega}}\frac{c^2}{\omega^2} 
    \partial^2_\td \Div \vv^0_1, 
    && \text{on } \partial\Omega
\end{align}
\end{subequations}
and for frequency $0\cdot\omega$ {the} far field approximation {solves the Stokes system}
\begin{subequations}
\label{eq:v:1:w1}
\begin{align}
  \label{eq:pv:01}
    {\nabla {p}^3_0 - \nu_0 \Delta {{\vv}^1_0}} %
    &= {-\frac{1}{4} \left(
      ({\vv}^0_1\cdot\nabla)\overline{{\vv}^1_1} +
      (\overline{\vv^0_1}\cdot\nabla){{\vv}^1_1} +
      ({\vv}^1_1\cdot\nabla)\overline{{\vv}^0_1} +
      (\overline{\vv^1_1}\cdot\nabla){{\vv}^0_1} \right)}, %
    && \text{in }\Omega\\
    \Div {\vv}^1_0 &= 0,  && \text{in }\Omega \\
    \vv^1_0 &= \zerobf, && \text{on } \partial\Omega\ ,
\end{align}
\end{subequations}
where we have used $\vv^0_0 = 0$. 
The stationary velocity term $\vv^1_0$ is coupled with the stationary pressure 
term $p^3_0$, however, the system is linear.
%

\smallskip
\paragraph{\bf\em Approximation of order $O(\eps^4)$.}
 The next correcting terms, \ie\ for $j=2$, for frequency $1\cdot\omega$ are given by
  \begin{subequations}
  \label{eq:v:2}
  \begin{align}
  \nabla \Div {\vv}^2_1 +\frac{\omega^2}{c^2} {\vv}^2_1 
  &= -\frac{\imag\nu_0\omega^3}{c^4}\vv^0_1 - \frac{\nu_0\omega^2}{c^4}\vf_0,
  && \text{in }\Omega 
  \label{eq:v21:wave} \\
  \vv^2_1\cdot\vn &= %
    \frac{c^2}{\omega^2} \left( (1+\imag)\sqrt{\frac{\nu_0}{2\omega}} %
    \partial_\td^2\Div\vv^1_1%
    + {\frac{\imag\nu_0}{2\omega}}\partial_\td(\kappa\partial_\td\Div \vv^0_1)
    \right), && \text{on } \partial\Omega.
\end{align}
\end{subequations}
where we used~\eqref{eq:v:0:w1} and the fact that $\scurl{\vv}^0_1=0$.
By the assumption on the source function $\scurl \vf_0 = 0$ the term
$\tfrac{\nu_0}{\omega^2}\plcurl\scurl \vv^0_1$ in \eqref{eq:v21:wave} disappears.
{For the system for the frequency $0\cdot\omega$ the far field pressure term 
$p^0_1$ is needed, which is obtained
{\em a-posteriori} from the far field velocity $\vv^0_1$ as}
\begin{align}
  \label{eq:vj1:pj1}
    p^0_1 &= -\frac{\imag c^2}{\omega}\,\Div \vv^0_1\ .
\end{align}
For frequency {$0\cdot\omega$} {far field} second correcting terms are 
\begin{subequations}
\label{eq:pv:02}
  \begin{align}
    \nabla {p}^4_0 
    - \nu_0 \Delta {{\vv}^2_0} &= {
      - (\vv_0^1 \cdot \nabla ) \vv_0^1 -\frac{1}{4} \left(
      ({\vv^0_1}\cdot\nabla)\overline{{\vv}^2_1} +
      ({\vv^1_1}\cdot\nabla)\overline{{\vv}^1_1} +
      ({\vv^2_1}\cdot\nabla)\overline{{\vv}^0_1} \right)} \nonumber \\
                               &\hspace{0.8em} \ 
      -\frac{1}{4} \left(
      (\overline{\vv^0_1}\cdot\nabla){{\vv}^2_1} +
      (\overline{\vv^1_1}\cdot\nabla){{\vv}^1_1} +
      (\overline{\vv^2_1}\cdot\nabla){{\vv}^0_1} \right)
    && \text{in }\Omega \\
  \label{eq:pv:02:mass}
    \Div {\vv}^2_0 &= { -\frac{1}{4 c^2} \big( \vv_1^0 \cdot
                     \overline{\vf_0} + \overline{\vv_1^0} \cdot
                     {\vf_0} \big)} 
                               && \text{in }\Omega \\
    %
    \vv^2_0 &= \zerobf, && \text{on } \partial\Omega
   \end{align}
\end{subequations}
where we used $\scurl \vv^2_1 =0$ which is due to the 
fact that $\scurl \vv^0_1 =0$.

Moreover, the far field velocity at frequency $2\cdot\omega$ is obtained from
\begin{subequations}
\label{eq:v:22}
  \begin{align}
   \nabla \Div {\vv}^2_2 + \frac{4\omega^2}{c^2} \vv^2_2 &= 
    {- \frac{\imag\omega}{c^2} \nabla \big(\vv^0_1\big)^2}
    - \frac{1}{2c^2} \nabla (\vv^0_1 \cdot \vf_0)
    + \frac{\imag}{2\omega} \nabla 
    \big(\Div {{\vv}^{0}_1} \big)^2
    && \text{in }\Omega 
  \label{eq:v22:wave} \\
    %
    {\vv}^2_2\cdot\vn &= 0,    && \text{on }\partial\Omega,
\end{align}
and it follows by \eqref{eq:vj1:pj1} that
\begin{align}
  \label{eq:v:22:p}  
  p^2_2 & = {-\frac{\imag c^2}{2\omega}\Div \vv^2_2 -
          \frac{\imag}{2\omega} \vv_1^0 \cdot \vf_0 -
          \frac{c^2}{2\omega^2} \Big( (\Div \vv_1^0)^2 -
          \frac{\omega^2}{c^2} (\vv_1^0)^2 \Big) }
\end{align}
\end{subequations}

\subsection{Far field pressure up to $2^{\mathrm{nd}}$ order}
\label{sec:far-field-pressure}

As it was mentioned before, the far field approximations in frequency $\omega$ for $j=0,1,2$ are 
exactly the results for the linear problem in~\cite{Schmidt.Thoens.Joly:2014}. Accordingly, we can 
rewrite the equations in terms of the far field pressure with the suitable boundary conditions.
\paragraph{\em Approximation of order $O(\eps^2)$.}
The limit model is given by
  \begin{subequations}
  \label{eq:p:0}
  \begin{align}
    \Delta {p}^0_1 + \frac{\omega^2}{c^2} {p}^0_1 &= \Div {\vf}_0
    && \text{in }\Omega 
   \label{eq:p01:wave} \\
    \nabla p^0_1\cdot\vn &= 0, && \text{on } \partial\Omega. 
    %
  \end{align}
  \end{subequations}
\paragraph{\em Approximation of order $O(\eps^3)$.}
The first correcting terms are given by
  \begin{subequations}
  \label{eq:p:1}
  \begin{align}
    \Delta {p}^1_1 + \frac{\omega^2}{c^2} {p}^1_1 &= \Div\vf_1, 
    && \text{in }\Omega \\
    \nabla p^1_1\cdot\vn &= -(1+\imag)\sqrt{\frac{\nu_0}{2\omega}}
    \partial^2_\td p^0_1, && \text{on } \partial\Omega. 
    %
  \end{align}
  \end{subequations}
\paragraph{\em Approximation of order $O(\eps^4)$.}
 The next correcting terms, \ie\ for $j=2$, for frequency {$\omega$} are given by
  \begin{subequations}
  \label{eq:p:21}
  \begin{align}
  \Delta {p}^2_1 + \frac{\omega^2}{c^2} {p}^2_1
  &= {\Div\vf_2 + }
  \frac{\imag\omega\nu_0}{c^2} \Delta p^0_1,  && \text{in }\Omega \\
  \nabla p^2_1\cdot\vn &= 
    -(1+\imag)\sqrt{\frac{\nu_0}{2\omega}} \partial_\td^2 p^1_1 
    -\frac{\imag\nu_0}{2\omega} \partial_\td(\kappa\partial_\td p^0_1),
    && \text{on } \partial\Omega. 
  \end{align}
  \end{subequations}
  When the far field pressure terms for the frequency $\omega$ are computed we may obtain 
  {\em a posteriori} the far field  velocity terms by
  \begin{align}
  \label{eq:pj1:vj1}
  {\vv}^j_1 &= + \frac{\imag}{\omega} (\vf_j- \nabla {p}^j_1)
    -\frac{\nu_0}{c^2}\nabla p^{j-2}_1, 
  && \text{for } \quad j=0,1,2\ .
  \end{align}
  The far field approximation for frequency $0\cdot\omega$ is given by
  \begin{align}
    \label{eq:p:02} p^2_0 &= -\frac14 \big|\vv^0_1\big|^2, 
  \end{align}
  and in frequency $2\cdot\omega$  by
  \begin{subequations}
  \begin{align}	
  \label{eq:p:22}
    \Delta {p}^2_2 + \frac{4\omega^2}{c^2} {p}^2_2  &= 
  {-\frac{1}{4}\Delta \big({\vv}^0_1\big)^2}
  -\frac{\imag\omega}{c^2}{\Div(p^0_1 \vv^0_1)}
    && \text{in }\Omega\ , \\
    \nabla{p}^2_2\cdot\vn &= 0,    && \text{on }\partial\Omega\ , \\
    {\vv}^2_2 &= 
    {-\frac{\imag}{2\omega}\left(\frac{1}{4}\nabla\big({\vv}^0_1\big)^2 
    + \nabla {p}^2_2 \right)}, 
  && \text{in }\Omega\ .
  \label{eq:v:22:apos}
  \end{align}
  \end{subequations}

\subsection{Deriving the effective systems with impedance boundary conditions}
\label{sec:derivation:effectiveSystems}

In the previous sections we have derived the terms of the asymptotic expansions~\eqref{eq:asympExpan} up to order~2, which we can assemble to obtain pressure and velocity approximations of these orders.
To obtain pressure approximations of order~1 two Helmholtz systems have to be solved, for order~2 these are four Helmholtz systems. %
To obtain velocity approximations of order~1 we need to solve three PDEs, and for order~2 these are already six.  %
In general, the number of terms in the asymptotic expansion increase like $\frac14N^2$ with the order $N$, and, hence, the number of systems to solve.
In this section, we derive the effective systems given in Sec.~\ref{sec:ModelDef} that are written directly for approximative solutions of order 0, 1 and 2.
{The approximative solutions show} the same accuracy as the asymptotic expansions~\eqref{eq:asympExpan} but for a less computational effort as all terms for $j \leqslant N$ would have been computed at once.
Here, the number of systems to solve increases only linearly with $N$ and to obtain pressure and velocity approximations of order~2
only two or three systems, respectively, have to be solved.
The main idea is to combine the equations satisfied by each
{far field} term of~\eqref{eq:asympExpan} and to neglect the next order terms.
{In this way we} obtain equations satisfied by the pressure coefficients
$p_k^{\eps,N}$, where associated velocity coefficients
$\vw_k^{\eps,N}$ are defined afterwords as a function of the pressure
(see Sec.~\ref{sec:deriv-1textrmst-orde} for the first order and in
Sec.~\ref{sec:deriv-2textrmnd-orde} for the second order model),
or equations satisfied by the velocity coefficients
$\vv_k^{\eps,N}$, where associated pressure coefficients
$q_k^{\eps,N}$ are defined afterwords as a function of the velocity
(see Sec.~\ref{sec:deriv-1textrmst-orde-1} and Sec.~\ref{sec:deriv-2textrmst-orde-2} for the first and second order model, respectively).

\subsubsection{Derivation of $1^{\textrm{st}}$ order effective system
  for the pressure}
\label{sec:deriv-1textrmst-orde}

The derivation for the $1^{\textrm{st}}$ order effective system~\eqref{eq:pappr:1} for the pressure is exactly as for the linear case~\cite{Schmidt.Thoens:2014}.
Adding the system~\eqref{eq:p:0} for $p^1_0$ and $\eps$ times the system~\eqref{eq:p:1} for $p^1_1$
we obtain a system for the first order asymptotic expansion $\wt{p}_1^{\eps,1} := \eps^2( p_1^0 + \eps p_1^1 )$
\begin{subequations}
  \label{eq:p:0+eps1}
  \begin{align}
    \Delta \wt{p}_1^{\eps,1} + \frac{\omega^2}{c^2} \wt{p}_1^{\eps,1} &= \Div\vf, 
    && \text{in }\Omega, \\
    \label{eq:p:0+eps1:bc}
    \nabla \wt{p}_1^{\eps,1} \cdot\vn &= - \eps\ (1+\imag)\sqrt{\frac{\nu_0}{2\omega}}
    \partial^2_\td \big( \wt{p}_1^{\eps,1} - \eps^{3} p_1^1), && \text{on } \partial\Omega. 
  \end{align}
\end{subequations}
Neglecting in~\eqref{eq:p:0+eps1:bc} the $O(\eps^{4})$ term and
replacing $\eps \sqrt{\nu_0}$ by $\sqrt{\nu}$ gives~{\eqref{eq:pappr:1} with} 
Wentzel boundary conditions on the domain boundaries.
Now, adding~\eqref{eq:pj1:vj1} for $j=0$ and
$\eqref{eq:pj1:vj1}$ for $j=1$ multiplied by $\eps$ we obtain for the first order asymptotic expansions $\wt{\vv}^{\eps,1}_1 := \vv^0_1 + \eps \vv^1_1$
\begin{align*}
   \wt{\vv}^{\eps,1}_1 &= \frac{\imag}{\omega}(\vf^{\eps,1} - \nabla \wt{p}^{\eps,1} ),
\end{align*}
where $\vf^{\eps,1} = \eps^2(\vf_0 + \eps \vf_1)$. Noting that $\vf = \vf^{\eps,1} + O(\eps^4)$ and if $\wt{p}^{\eps,1}_1 = p^{\eps,1}_1 + O(\eps^4)$ holds
(this is the case if $\frac{\omega^2}{c^2}$ is not a Neumann eigenvalue of $-\Delta$, see ~\cite{Schmidt.Thoens:2014})
and neglecting the $O(\eps^4)$ terms, we find
that~\eqref{eq:wappr:k=1}  defines a first order velocity
approximation~$\vw_1^{\eps,1}$.
Adding the system~\eqref{eq:v:0:w0} for $\vv_0^0$ and $\eps$ times
  the system~\eqref{eq:v:1:w1} for $\vv_0^1$ and neglecting the $O(\eps^4)$ terms 
  we find that~\eqref{eq:vappr:1:k=0} defines a first order approximation stationary velocity
  $\vw_0^{\eps,1}$ that depends on $\vw^{\eps,1}_1$ and incorporates
  with a Lagrange multiplier $q_0^{\eps,3}$ that is $O(\eps^4)$.
%
Finally, we can reconstruct the
pressure and the velocity in time by
\begin{equation}
  \label{eq:reconstruction:1eps:time}
  p^{\eps,1}(t,\vx) = \Re\, p_1^{\eps,1}(\vx) \exp(-\imag \omega t),\qquad
  \vw^{\eps,1}(t,\vx) = \Re\, \sum_{k=0}^1 \vw_k^{\eps,1}(\vx) \exp(-\imag \omega t),
\end{equation}
with the neglected term in the reconstructions~\eqref{eq:reconstruction:1eps:time} being in $O(\eps^4)$.


\subsubsection{Derivation of $2^{\textrm{nd}}$ order effective system
  for the pressure}
\label{sec:deriv-2textrmnd-orde}
Similarly, taking $\eqref{eq:p:0}+\eps
\eqref{eq:p:1}+\eps^2 \eqref{eq:p:21}$, neglecting the $O(\eps^3)$
term and using that $\nu = \eps^2\nu_0$ leads
to the $2^{\textrm{nd}}$ order effective system~\eqref{eq:pappr:2} for the pressure at frequency $\omega$.
As well, we obtain \emph{a posteriori} the far field velocity {approximation} $\vw_1^{\eps,2}$ {defined by~\eqref{eq:wappr:k=1}},
combining~$\eps^j \eqref{eq:pj1:vj1}$ for $j=0$, $j=1$ and $j=2$ and
neglecting the {$O(\eps^5)$} term. 
Using that in the expansion $p_0^0 = p_0^1 = 0$,
taking~\eqref{eq:p:02} and neglecting the $O(\eps^3)$ term leads~to
\begin{equation}
  \label{eq:p:2eps:0}
  p_0^{\eps,2} = -\frac{1}{4} {\left|\vw_1^{\eps,2}\right|^2}\ , 
\end{equation}
{and so to~\eqref{eq:pappr:02}}.
Similarly, {using that $\vw^{\eps,2}_1 = \eps^2\vv^0_1 + O(\eps^3)$, $p^{\eps,2}_1 = \eps^2 p^0_1 + O(\eps^3)$
and $\frac{\omega^2}{c^2}p^{\eps,2} = \eps^2 \Div \vv^0_1 + O(\eps^3)$, the latter being a consequence of~\eqref{eq:pj1:vj1} for $j = 0$ and~\eqref{eq:p:0}, %
we find the $2^{\textrm{nd}}$ order effective system~\eqref{eq:pappr:22} for the pressure contribution at frequency $2\cdot\omega$.}
Then, using the equality~\eqref{eq:v:22:apos} and that $p^{\eps,2}_2 = \eps^{{4}} p^2_2 + O(\eps^{{5}})$ (assuming that $\frac{2\omega}{c}$ is not a Neumann eigenvalue of $-\Delta$) 
and $\frac{\imag}{\omega}(\vf - \nabla p^{\eps,1}) = \eps^2 \vv^0_1 + O(\eps^{5})$ we obtain the equation~\eqref{eq:wappr:k=2} for the velocity approximation $\vw^{\eps,2}_2$ in terms of $p^{\eps,2}_1$
and $p^{\eps,2}_2$.
Adding the system~\eqref{eq:v:0:w0} for $\vv_0^0$, $\eps$ times
  the system~\eqref{eq:v:1:w1} for $\vv_0^1$ and $\eps^2$ times the
  system~\eqref{eq:pv:02} we find that
  \eqref{eq:vappr:2:k=0} defines a first order approximation stationary velocity
  $\vw_0^{\eps,2}$ with a Lagrange multiplier $q_0^{\eps,4} = 
  p_0^{\eps,2} + O(\eps^5)$.
Finally, we can reconstruct the pressure and the velocity in time by
\begin{equation}
  \label{eq:reconstruction:2eps:time}
  p^{\eps,2}(t,\vx) = \Re\, \sum_{k=0}^2 p_k^{\eps,2}(\vx) \exp(-\imag
  k \omega t),\qquad
  \vw^{\eps,2}(t,\vx) = \Re\, \sum_{k=0}^2 \vw_k^{\eps,2}(\vx)
  \exp(-\imag k \omega t)\ ,
\end{equation}
with the neglected term in the reconstructions~\eqref{eq:reconstruction:2eps:time} being in $O(\eps^5)$.

\subsubsection{Derivation of $1^{\textrm{st}}$ order effective system for
  the velocity}
\label{sec:deriv-1textrmst-orde-1}

Taking $\eqref{eq:v:0:w1}+\eps \eqref{eq:v:1:w0}$ and neglecting the
$O(\eps^4)$ term leads to the $1^{\textrm{st}}$ order effective system~\eqref{eq:vappr:1} for the velocity component to the frequency~$\omega$.

Now, adding~$\eps^4\eqref{eq:v:0:w0}$ and $\eps^5\eqref{eq:v:1:w1}$ and using that $\vv^0_0 = 0$ we find %
that $\wt{\vv}^{\eps,1}_0 := \eps^3 \vv^1_0$, $\wt{p}^{\eps,1}_0 := \eps^4 (p^2_0 + \eps p^3_0)$ solve %
\begin{align*}
   \nabla \wt{p}^{\eps,1}_0 - \nu \Delta\wt{\vv}^{\eps,1}_0 &= -\frac14 \left( (\wt{\vv}^{\eps,1}_1\cdot \nabla) \overline{\wt{\vv}^{\eps,1}_1} + \overline{\wt{\vv}^{\eps,1}_1}\cdot \nabla) \wt{\vv}^{\eps,1}_1 \right)
   + \frac{\eps^6}{4}\left((\vv^1_1\cdot\nabla)\overline{\vv^1_1} + (\overline{\vv^1_1}\cdot\nabla)\vv^1_1\right), &&\text{ in } \Omega\ ,\\
   \Div \wt{\vv}^{\eps,1}_0 &= 0, &&\text{ in } \Omega\ ,\\
   \wt{\vv}^{\eps,1}_0 &= \zerobf, &&\text{ on } \partial\Omega\ .
\end{align*}
Neglecting the $O(\eps^6)$ term on the right hand side leads to~\eqref{eq:vappr:1:k=0}. %
Finally, we get similarly the equality~\eqref{eq:pappr:N1} for the {\em a-posteriori} computed pressure approximation $q^{\eps,1}_1$, where~\eqref{eq:vj1:pj1} and 
\begin{align*}
   p^1_1 &= -\frac{\imag c^2}{\omega} \Div \vv^1_1
\end{align*}
are used.

\subsubsection{Derivation of $2^{\textrm{nd}}$ order effective system for
  the velocity}
\label{sec:deriv-2textrmst-orde-2}

The derivation of the systems~\eqref{eq:vappr:2} for $\vv^{\eps,2}_1$ and~\eqref{eq:vappr:2:k=0} for $\vv^{\eps,2}_0$ is similar to the respective first order systems as well 
as equality~\eqref{eq:pappr:N1} for $q^{\eps,2}_1$. %
The system~\eqref{eq:vappr:22} for the velocity contribution $\vv^{\eps,2}_2$ at frequency $2\cdot\omega$ is a direct consequence of~\eqref{eq:v:22}
and the equation\eqref{eq:pappr:22} for the pressure $q^{\eps,2}_2$ is a direct consequence of~\eqref{eq:v:22:p}.
Then, the system~\eqref{eq:vappr:2:k=0} for the velocity contribution $\vv^{\eps,2}_0$ at frequency $0\cdot\omega$ 
is derived similarly to~\eqref{eq:vappr:1:k=0} of order~1 using the systems~\eqref{eq:v:1:w1} and~\eqref{eq:pv:02}. %



\begin{figure}[tb]
\centering
\subfigure[For the viscosity $\nu=3.6\cdot10^{-3}$, $\|\vf\|_\infty=0.163$, 40 periods in the 
exact model.]
{  \parbox[t]{3cm}{\includegraphics[width=3cm]{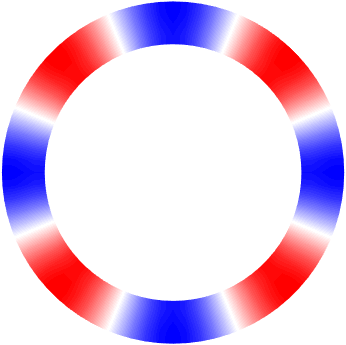} 
\\\centering{Order $N=0$}}
     \hfill
  \parbox[t]{3cm}{\includegraphics[width=3cm]{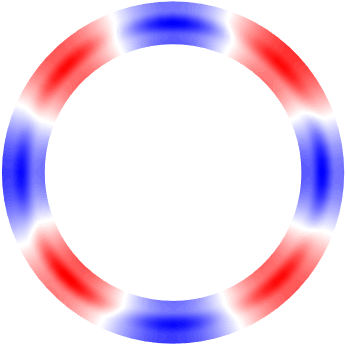} 
\\\centering{Order $N=1$}}
     \hfill
  \parbox[t]{3cm}{\includegraphics[width=3cm]{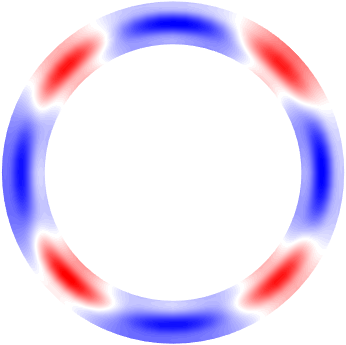} 
\\\centering{Order $N=2$}} 
     \hfill
  \parbox[t]{3cm}{\includegraphics[width=3cm]{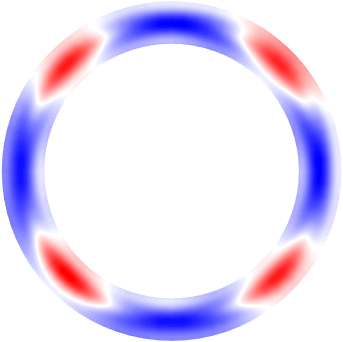} 
\\\centering{Exact model}}
  \hfill
  \parbox[t]{3cm}{\includegraphics[width=3cm]{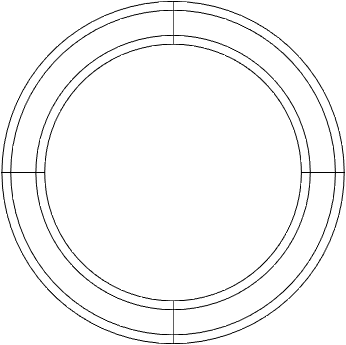} 
  \\\centering{Mesh}   } 
}
\subfigure[For the viscosity $\nu=4\cdot10^{-4}$, $\|\vf\|_\infty=0.018$, 25 periods in the exact 
model.]
{  \parbox[t]{3cm}{\includegraphics[width=3cm]{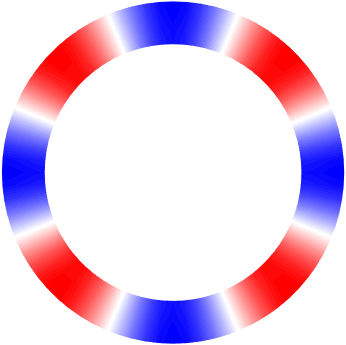} 
\\\centering{Order $N=0$}}
     \hfill
  \parbox[t]{3cm}{\includegraphics[width=3cm]{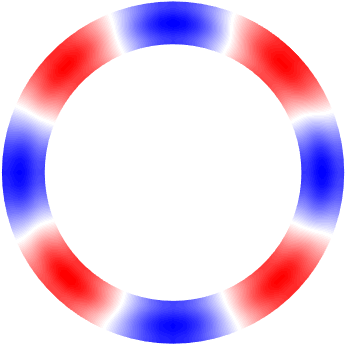} 
\\\centering{Order $N=1$}}
     \hfill
  \parbox[t]{3cm}{\includegraphics[width=3cm]{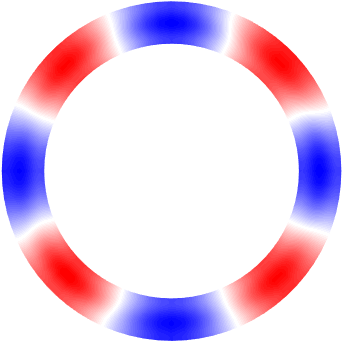} 
\\\centering{Order $N=2$}} 
     \hfill
  \parbox[t]{3cm}{\includegraphics[width=3cm]{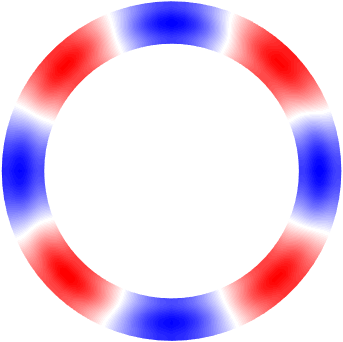} 
\\\centering{Exact model}}
  \hfill
  \parbox[t]{3cm}{\includegraphics[width=3cm]{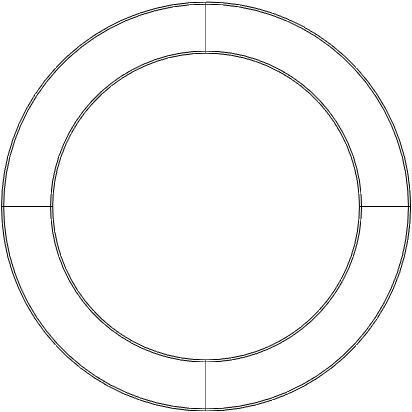} 
  \\\centering{Mesh}   }
}
\caption[Comparison]{Comparison the quasi-stationary state solution for the pressure at the end 
of the period $T=2\pi/\omega$ of
  the approximate models of order $N=0, 1, 2$ to the exact pressure
  (frequency of the excitation $\omega=15$).  The mesh
  resolving the boundary layers used in the FEM of higher order is
  shown in the right subfigure.}
    \label{fig:pressure}
\end{figure}

\section{Numerical results}
\label{sec:numerics}

We verify the derived approximative models with impedance boundary conditions on a ring domain $\Omega$ centered at $(0,0)$
whose inner radius is $R_1 = 1.5$ and outer radius is $R_2 = 2.0$. %
For this we choose several values for $\eps$, where the viscosity $\nu = \eps^2$ (\ie, $\nu_0 = 1$)
and the source $\vf$ takes the decomposition $\vf(t,\vx) = \left(\eps^2 \vf_0(\vx) + \eps^3 \vf_1(\vx)\right)\cos(\omega t)$ 
with $\omega = 15$, where the dominating part $\eps^2\vf_0$ is $\scurl$-free. More precisely, we take 
$\vf_0 = \Re \nabla p_0$ with 
\begin{align*}
  p_0(\vx) = \left( \big( Y_{\lambda-1}(k R_1)- Y_{\lambda+1}(k R_1) \big) J_\lambda(kr(\vx)) 
  + \big( J_{\lambda+1}(k R_1)- J_{\lambda-1}(k R_1) \big) Y_\lambda(kr(\vx)) \right)
  \mathrm{e}^{\imag \lambda \phi(\vx)}\ ,
\end{align*}
with the polar coordinates $(r,\phi)$ in the ring, $\lambda = 4$ and $k = 2.28945$ computed numerically such that 
the Neumann trace $\nabla p_0\cdot\vn = 0$ on the boundary $\partial\Omega$ of the ring.
In this way, $p_0 \in H^1(\Omega)$ is solution of the Helmholtz equation
\begin{align*}
  \Delta p + k^2 p &= 0, && \text{in }\Omega\ , \\
  \nabla p \cdot \vn &=0,   && \text{on } \partial\Omega\ .
\end{align*}
Hence, the normal component of the source $\vf_0$ vanishes on $\partial\Omega$. %
As the tangential component of $\vf_0$ does not vanish we use the formulations with additional terms $\vf\cdot\vn^\bot$
that are given in Appendix~\ref{sec:appendix:IBC:source}. %
Moreover, the second term of the source is a bubble function {with} 
$\vf_1(\vx) = (R_1^2 - r(\vx)^2)(R_2^2 - r(\vx)^2)\binom{1}{1}$
that vanishes in both components on $\partial\Omega$.

We have computed numerically approximative solutions of different order by high order finite elements with curved cells
using the numerical C++ library Concepts~\cite{Frauenfelder.Lage:2002,Schmidt.Kauf:2009,conceptsweb},
where we use the formulation for the pressure. To estimate the modelling error of these approximative solution we compute numerically 
a reference solution in time-domain using a {modified} Crank-Nicolson {scheme} 
{in which} the nonlinear advection terms are discretized explicitly (see~\cite{Tone:2004,Yang:2009} for similar schemes for incompressible fluids).
The time-domain formulation with time step $\Delta t > 0$ is in both variables, 
the pressure and the velocity, and given by
\begin{align*}
  \frac{\vv^{\ell+1}-\vv^\ell}{\Delta t} + (\vv^\ell \cdot \nabla)\vv^\ell
  -\nu \Delta \vv^{\ell+\frac12} + \nabla p^{\ell+\frac12} &= \vf^{\ell+\frac12}\ ,&& \text{ in } \Omega\ , \\
  \frac{p^{\ell+1}-p^\ell}{\Delta t} + c^2 \Div \vv^{\ell+\frac12} 
  + \Div(p^{\ell} \vv^{\ell}) &=0\ ,&& \text{ in } \Omega\ ,\\[0.5em]
  \vv^{\ell+1} &= \zerobf\ ,&& \text{ on } \partial\Omega\ ,
\end{align*}
where $(\vv^\ell, p^\ell)$ is a numerical approximation to $(\vv(\ell\Delta t,\cdot), p(\ell\Delta t,\cdot))$,
$\vf^\ell := \vf(\ell\Delta t,\cdot)$ and $\vv^{\ell+\frac12}$, $p^{\ell+\frac12}$, $\vf^{\ell+\frac12}$ denote the averages
\begin{align*}
  \vv^{\ell+\frac12} &:= \tfrac{1}{2}(\vv^{\ell+1}+\vv^\ell)\ , &
  p^{\ell+\frac12} &:= \tfrac{1}{2}(p^{\ell+1}+p^\ell)\ &
  \vf^{\ell+\frac12} &:= \tfrac{1}{2}(\vf^{\ell+1}+\vf^\ell)\ .
\end{align*}
As initial velocity and pressure we use the solution of the linear system (without the nonlinear advection terms) and simulate for $15$ periods 
to $50$ depending on $\eps$ to obtain an accurate approximation to the quasi-stationary solution that is periodic in $t$. %
To resolve the boundary layers of order $\eps$ numerically we use the {\em hp}-adaptive strategy of Schwab and Suri~\cite{Schwab.Suri:1996},
where we use a mesh with curved cells with a high aspect ratio (see the meshes in Fig.~\ref{fig:pressure}) where the size normal to the boundary 
behaves linear in $\eps$ (or $\sqrt{\nu}$), $\sqrt{\omega}$ and the polynomial order $p$.
In the experiments we have used a uniform polynomial order $p = 8$ and a time steps $\Delta t$ between $2\cdot 10^{-4}$ for $\eps = 10^{-2}$ and
$2\cdot 10^{-3}$ for $\eps = 10^{-1}$. %
Even so not necessary, we use the same mesh and polynomial order for the approximative models of order $N = 0, 1, 2$. Note, that the computation of
the reference solution is by far more expensive than the computation of the approximative models.

\begin{figure}[tb]
  \centering
\begin{tikzpicture}
\begin{axis}[
    width=10cm, height=6cm,
    ybar,
    enlargelimits=0.15,
    ymode=log,
    log origin=infty,
    ymax = 1e-1,
    legend style={at={(0.5,-0.25)},
    anchor=north,legend columns=-1},
    ylabel={pressure level},
    xtick=data,
    xlabel={frequency},
    xticklabels={0, $\omega = 15$, $2\cdot\omega = 30$, $3\cdot\omega = 45$}
    ] 
%
\addplot coordinates {(0, 4.3632e-003) (15, 33.1884e-003) (30, 605.4373e-006) (45, 76.0945e-006)};
\addplot coordinates {(0, 4.2328e-003) (15, 32.9257e-003) (30, 549.8469e-006) };
\legend{exact model, approximative model of order 2}
\end{axis}
\end{tikzpicture}
\caption{$L_2(\Omega)$-norm bar chart comparing the different frequencies modes of the 
solution $p(\vx)$ of the original nonlinear system and the approximative model in frequency domain of order 2
for viscosity $\nu=3.6\cdot 10^{-2}$ and $\|\vf\|_\infty=1.72$.}
\label{fig:L2norm_of_modes}
\end{figure}
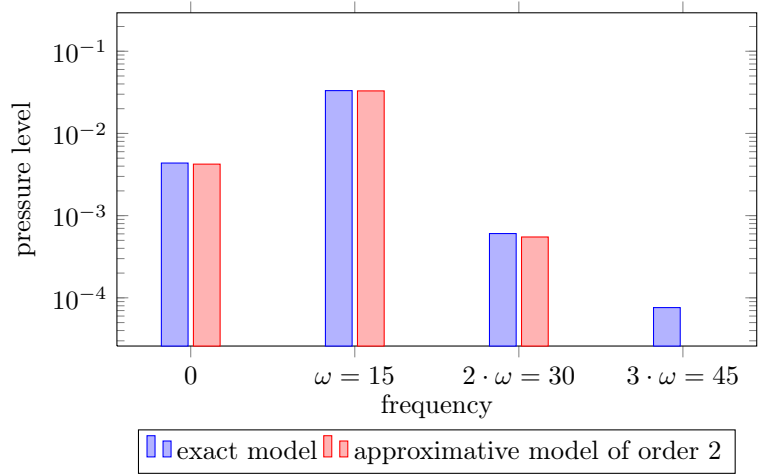

\begin{figure}[tb]
  \centering
  \begin{tikzpicture}[scale=0.96]
    \pgfplotsset{ legend style={
        at={(0.05,0.83)},
        font=\footnotesize, anchor=west} %
    } %
    \begin{loglogaxis}[%
      width=12cm, height=7cm,
      xlabel=$\eps$,
      ylabel=modelling error,
      xmin=1e-2,xmax=1e-1,
      ymin=1e-2,ymax=1e0,
      legend style={draw=none} 
      ] 
      \addplot+[only marks] table [x = eps, y = Order0] {pModError.dat};
      \addlegendentry{Order $N=0$};
      \addplot+[only marks] table [x = eps, y = Order1] {pModError.dat};
      \addlegendentry{Order $N=1$};
      \addplot+[only marks, mark=diamond*, violet, mark options={fill=violet},mark size=3pt] 
        table [x = eps, y = Order2] {pModError.dat};
      \addlegendentry{Order $N=2$};
      \addplot [blue]
      table [
        x=eps, 
        y={create col/linear regression={
        y=Order0, 
        variance list={100,100,1,1,1,1}
        }}
      ] {pModError.dat}
        coordinate [pos=0.2] (A) 
        coordinate [pos=0.5] (B)
      ;
     \draw (B) -| (A)  
        node [pos=0.75,anchor=east] {0.98} 
     ;
      \addplot [red]
      table [
        x=eps, 
          y={create col/linear regression={
          y=Order1,
        variance list={100,100,100,1,1,1}
          }}
        ] {pModError.dat}
        coordinate [pos=0.5] (A) 
        coordinate [pos=0.7] (B)
      ;
     \xdef\slope{\pgfplotstableregressiona} 
     \draw (B) -| (A)  
        node [pos=0.75,anchor=east] {1.98} 
     ;
      \addplot [violet]
      table [
        x=eps, 
          y={create col/linear regression={
          y=Order2,
        variance list={100,100,100,100,100,1,1}
        }}
        ] {pModError.dat}
        coordinate [pos=0.6] (A) 
        coordinate [pos=0.8] (B)
      ;
     \xdef\slope{\pgfplotstableregressiona} 
     \draw (A) -| (B)  
        node [pos=0.75,anchor=west] {\num[round-mode=figures,round-precision=3]{\slope}} 
     ;
    \end{loglogaxis}
  \end{tikzpicture}
\caption{The relative modelling error 
  {$\|p-p^{\eps,N}\|_{L^2\big((0,T),L^2(\Omega)\big)}/\|p\|_{L^2\big((0,T),L^2(\Omega)\big)}$ 
  for \mbox{$N=0,1,2$} as a function of the parameter $\eps$.}}
\label{fig:error}
\end{figure}
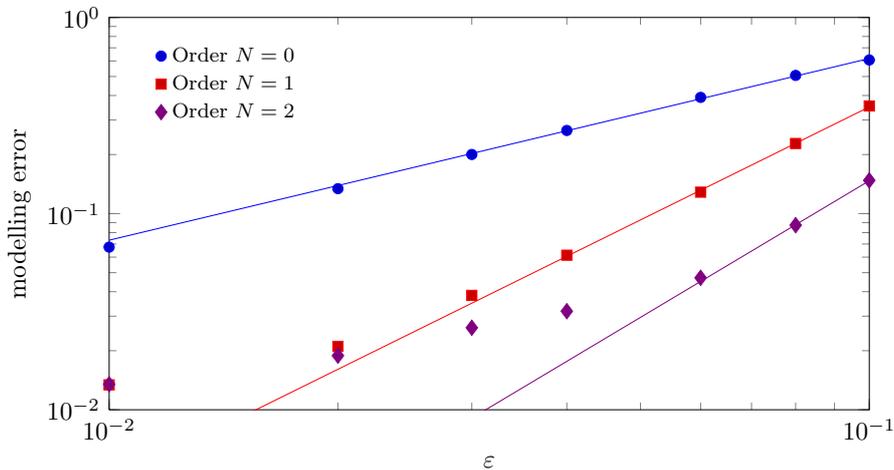

In Fig.~\ref{fig:pressure} the approximative pressures distributions
\begin{align}
   p^{\eps,N}(t,\vx) = \Re \sum_{k=0}^{\lceil\frac{N+1}{2}\rceil} p^{\eps,N}_k(\vx) \mathrm{e}^{-\imag k \omega t}\ ,
\end{align}
that are composed of the modes $p^{\eps,N}_k$ are shown for $N = 0, 1, 2$ in comparison with the reference solution that is obtained in time domain. 
Depending on the magnitude of the viscosity and the source a good agreement is achieved with a high enough order of the approximative solution
that is $N = 0$ or $N = 2$ in the two examples.

Using the inverse Fourier transform of the reference pressure in the respective last period we have computed the $L^2(\Omega)$-norm of the contributions 
to the frequencies $0$, $\omega$, $2\,\omega$ and $3\,\omega$ and obtained a very well agreement with the contributions of the approximative solution of second order 
(see Fig.~\ref{fig:L2norm_of_modes}).

Finally, we have studied the modelling error of the approximative models of order $N = 0,1,2$ in dependence of the parameter $\eps$ in the $L^2(\Omega)$-norm in the last period
of the in time-domain computed reference solution (see Fig.~\ref{fig:error}).
We clearly observe convergence orders of $1$ and $2$ of the relative $L^2$-modelling error for the approximative model of order~$0$ and $1$, which are both linear and 
have only contributions at the frequency $\omega$. For the approximative model of order~$2$ that has non-linear contributions for the frequencies $0$ and $2\cdot\omega$
we find numerically a convergence rate larger than two and much lower error levels for the considered values of $\eps$. %

\section{Conclusion}

For the acoustic wave-propagation in the presence of viscous boundary layers and frequency interaction due to 
nonlinear advection terms approximative models up to order~2 with impedance boundary conditions have been introduced. %
They are based on a multiscale expansion and multiharmonic analysis of the compressible nonlinear Navier-Stokes equations 
for small sound amplitudes of $O(\eps^2)$ and viscosities of $O(\eps^2)$, where $\eps$ is a small parameter. %
In the approximative models the contributions to the excitation frequency $\omega$ and its harmonics can be 
computed sequentially after each other. %
As the approximative models are for macroscopic pressure or velocity fields no adaptive mesh refinement is necessary,
which implies in the original model of the Navier-Stokes equations a reduction of time steps in explicit and semi-implicit schemes. %
In numerical experiments using finite element discretisation of the frequency domain approximative model and the original 
instationary compressible Navier-Stokes equations a good agreement has been shown as well as a convergence of the approximative
solutions to the reference solution. %

The derivation is for small sound amplitudes and it is of interest to extent the results to higher sound amplitudes than $O(\eps^2)$ 
which exhibit higher frequency interaction in the viscous layers. %
Moreover, the nonlinear Navier-Stokes equations and frequency interaction is of high interest for the modelling of liners
where for periodically perforated plates the method of surface homogenization has been developed to obtain approximative impedance
transmission conditions~\cite{Bonnet.Drissi.Gmati:2004,Delourme.Schmidt.Semin:2016}.

\section*{Acklowledgements}

The authors gratefully acknowledge the financial support by the research center {\sc Matheon} through the 
Einstein Center for Mathematics Berlin (project MI--2) and are thankful to the fruitful exchange with the DLR Berlin.

\appendix
\section*{Appendix A}
\setcounter{section}{1}
\renewcommand\thesection{\Alph{section}}
\label{sec:appendix}
\addcontentsline{toc}{section}{\numberline {A} The far and near field equations to any order}%

\subsection{Deriving the far field equations}
\label{sec:appendix:farfield}

The far field terms will be defined in physical coordinates in the
whole domain $\Omega$ where we assume $\partial\Omega$ to be
$C^\infty$. %
Inserting the expansion~\eqref{eq:asympExpan} into the
system~\eqref{eq:navier.stokes:eps} for $\vv^{\eps,M}$, $p^{\eps,M}$
for a particular coordinate $\vx \in \Omega$ and letting $\eps$
tend to zero, the near field terms concentrate closer and
closer to the wall and vanish on $\vx$. %
Collecting terms of the same order in $\eps$ results in the far field
  equations:
\begin{subequations}
\label{eq:farfield}
\begin{multline}
\label{eq:moment:k1}
  -\imag k \omega {\vv}_k^j +  \nabla {p}_k^j  
  = {\vf_j} \cdot \delta_{k=1} 
    + \nu_0\Delta {\vv}_k^{j-2}  \\
  -\frac 12 \sum_{\ell=0}^{j-2} \left(
  \sum_{m=0}^{k}  ({\vv}_m^\ell\cdot\nabla){\vv}_{k-m}^{j-\ell-2} + 
  \sum_{m=k}^{M} ({\vv}_m^\ell\cdot\nabla)\overline{{\vv}_{m-k}^{j-\ell-2}} +
  \sum_{m=0}^{M-k} (\overline{{\vv}_m^\ell}\cdot\nabla){\vv}_{m+k}^{j-\ell-2} 
  \cdot \delta_{k\neq 1} \right), 
\end{multline}
\vspace{-2em}
\begin{multline}
\label{eq:mass:k1}
  -\imag k \omega {p}_k^j + c^2\,\Div {\vv}_k^j =
  -\frac 12 \sum_{\ell=0}^{j-2} \left( 
  \sum_{m=0}^{k}  {\vv}_m^\ell\cdot\nabla {p}_{k-m}^{j-\ell-2} + {p}_m^\ell 
\Div{\vv}_{k-m}^{j-\ell-2} \right. \\
  + \sum_{m=k}^{M} \left. {\vv}_m^\ell\cdot\nabla \overline{{p}_{m-k}^{j-\ell-2}} + {p}_m^\ell 
\Div\overline{{\vv}_{m-k}^{j-\ell-2}} +
  \sum_{m=0}^{M-k} \left(\overline{{\vv}_m^\ell}\cdot\nabla {p}_{m+k}^{j-\ell-2} + 
\overline{{p}_m^\ell} 
\Div{\vv}_{m+k}^{j-\ell-2}\right) \cdot \delta_{k\neq 1} \right)
\end{multline}
\end{subequations}
for $k \in \IN^+$, where $\vv^{-1}_k = \vv^{-2}_k = {\mathbf 0}$,  
and ${\vf_j}=0$ for $j>1$.
The far field equations will be completed by boundary conditions,
which are specified in Sec.~\ref{sec:Main} for $j=0,1,2$.

\subsection{Deriving the near field equations} 
\label{sec:appendix:nearfield}

The following near field equations in local coordinates derived under a condition, 
that the near field expansion~\eqref{eq:BL:expan} inserted 
into~\eqref{eq:navier.stokes:eps} 
leaves a residual as small as possible in powers of~$\eps$, which is at least of order $\eps^{N+1}$ 
\vspace{-2em}
\begin{subequations}
\label{eq:nf}
\begin{multline} 
  \label{eq:nfmomt}
  \imag k\omega{u}^j_{k,\td} +\nu_0\de{S}^2 {u}^j_{k,\td}
   = \sum_{\ell=1}^3 C_{\ell} (\vu^{j-\ell}_k)  
  +  \sum_{\ell=0}^2 %
  \begin{pmatrix}
    2 \\
    \ell \end{pmatrix} 
(-\kappa S)^\ell \de{\td}q^{j-\ell} \\
  + \frac 12 \sum_{\ell=1}^{4} \sum_{i=0}^{j-\ell} \left(
 \sum_{m=0}^{k} E_\ell( {\vu^i_m},\vu^{j-i-\ell}_{k-m}) + 
 \sum_{m=k}^{M} E_\ell( {\vu^i_m},\overline{\vu^{j-i-\ell}_{m-k}} ) +
 \sum_{m=0}^{M-k} E_\ell( \overline{\vu^i_m},\vu^{j-i-\ell}_{k+m} ) 
  \cdot \delta_{k\neq 0} \right)
\end{multline} 
\vspace{-2em}
\begin{multline} 
  \label{eq:nfmoms}
  \de{S}q^{j}_k = 
  \imag k\omega{u}^{j-1}_{k,s} +\nu_0\de{S}^2 {u}^{j-1}_{k,s} 
  ({\vu^{j}_k}^\top)
   - \sum_{\ell=1}^3 \left(C_{\ell} ({\vu^{j-1-\ell}_k}^{\bot}) +
  \begin{pmatrix}
    3 \\
    \ell \end{pmatrix} 
  (-\kappa S)^\ell \de{\td}q^{j-\ell}_k \right) \\
   - \frac 12 \sum_{\ell=1}^{4} \sum_{i=0}^{j-1-\ell} \left(
 \sum_{m=0}^{k} E_\ell( {\vu^i_m},{\vu^{j-1-i-\ell}_{k-m}}^{\bot}) + 
 \sum_{m=k}^{M} E_\ell( {\vu^i_m},{\overline{\vu^{j-1-i-\ell}_{m-k}}}^{\bot} ) +
 \sum_{m=0}^{M-k} E_\ell( {\overline{\vu^i_m},\vu^{j-1-i-\ell}_{k+m}}^{\bot} ) 
  \cdot \delta_{k\neq 0} \right)
\end{multline} 
\vspace{-2em}
\begin{multline} 
\label{eq:nfmass}
  \de{S} {u^{j}_{k,s}} =
   - \de{\td} u^{j-1}_{k,\td}
   +  \kappa (S\de{S} {u^{j-1}_{k,s}} + {u^{j-1}_{k,s}}) 
  + \imag k\omega( {q^{j-1}_{k}} -\kappa S {q^{j-2}_{k}} )\\
  - \frac {1}{2c^2} \sum_{\ell=1}^{2}  \sum_{i=0}^{j-1-\ell} \left(
 \sum_{m=0}^{k} G_\ell( {\vu^i_m},q^{j-1-i-\ell}_{k-m}) + 
 \sum_{m=k}^{M} G_\ell( {\vu^i_m},\overline{q^{j-1-i-\ell}_{m-k}} ) +
 \sum_{m=0}^{M-k} G_\ell( \overline{\vu^i_m},q^{j-1-i-\ell}_{k+m} ) 
  \cdot \delta_{k\neq 0} \right)
\end{multline}
\end{subequations}
where the coefficients are following
\begin{align*}
   C_1(\vu) &= \kappa\big(3\,\imag k \omega S + 3\nu_0 S \de{S}^2 + 
\nu_0\de{S}\big) u_\td, \\
   C_2(\vu) &= -\nu_0\de{\td}^2u_\td -\kappa^2 \big(3\,\imag k\omega S^2 + 
    3\nu_0 S^2\de{S}^2  + \nu_0 (2 S\de{S} -1) \big) u_\td 
    +\nu_0(2\kappa\de{\td}+\kappa')  u_s, \\
   C_3(\vu) &= \kappa^3\big(\imag k \omega S^3 + \nu_0 S (S^2\de{S}^2
    +  S\de{S} - 1 )\big)u_\td + \nu_0 \big(S (\kappa \de{\td}^2  
    - \kappa' \de{\td})u_\td - 2\kappa^2 S\de{\td}u_s\big).
\end{align*}
Coefficients related to the nonlinear terms in the momentum equation 
given by
\begin{align*}
 E_1(\vu,\vv) &= u_s \de{S}v_\td, \\
 E_2(\vu,\vv) &= u_\td \de{\td}v_\td -\kappa ( 3 S u_s 
    \de{S}v_\td + u_\td v_s), \\
 E_3(\vu,\vv) &= \kappa S ( -2u_\td \de{\td}v_\td + 3\kappa S u_s \de{S}v_\td 
  + 2\kappa S u_\td v_s), \\
 E_4(\vu,\vv) &= \kappa^2 S^2 ( u_\td \de{\td}v_\td
  -\kappa S u_s \de{S}v_\td  -\kappa  u_\td v_s ).
\shortintertext{
Those, for the near field continuity equation}
 G_1(\vu,q) &=  u_S \de{S} q + q \de{S} u_s, \\
 G_2(\vu,q) &= u_\td \de{\td} q + q\de{\td}u_\td -\kappa \big( S u_S \de{S} 
q + q ( S\de{S} u_s + u_s) \big).
\end{align*}

\subsection{Impedance boundary conditions for the far field with the source on the 
boundary}
\label{sec:appendix:IBC:source}

In case if the source function $\vf$ does not disappear on the boundary impedance boundary conditions contain additional terms in the frequency of the excitation mode $1\cdot\omega$. For the far field pressure they are
\begin{subequations}
 \begin{align}
  \nabla {p}^{\eps,0}_1 \cdot \vn  &= \vf\cdot\vn, \\
  \nabla {p}^{\eps,1}_1 \cdot \vn + (1+\imag)\sqrt{\frac{\nu}{2\omega}}\partial_\td^2 
  {p}^{\eps,1}_1  &= \vf \cdot \vn-(1+\imag)\sqrt{\frac{\nu}{2\omega}}
  \partial_\td(\vf\cdot\vn^\bot), \\
  \nabla {p}^{\eps,2}_1 \cdot \vn + (1+\imag)\sqrt{\frac{\nu}{2\omega}}
    \partial_\td^2 {p}^{\eps,2}_1 
    +\frac{\imag\nu}{2\omega}\partial_\td(\kappa\partial_\td{p}^{\eps,2}_1) &= \\
    \Big(1+ \frac{\imag\omega\nu}{c^2}\Big)\vf \cdot \vn 
    -(1+\imag)\sqrt{\frac{\nu}{2\omega}} \partial_\td(\vf\cdot\vn^\bot)
    &- \frac{\imag\nu}{2\omega} \partial_\td(\kappa \vf\cdot\vn^\bot)
    - \frac{\imag\nu}{\omega}\plcurl\scurl\vf\cdot\vn. \nonumber
 \end{align}
\end{subequations}
and for the far field velocity
\begin{subequations}
 \begin{align}
    {\vv}^{\eps,0}_1\cdot\vn &= 0, \\
    {\vv}^{\eps,1}_1\cdot\vn - (1+\imag)\frac{c^2}{\omega^2}
    \sqrt{\frac{\nu}{2\omega}} \partial_\td^2\Div{\vv}^{\eps,1}_1  
    &= \frac{(\imag-1)}{\omega} \sqrt{\frac{\nu}{2\omega}}
    \partial_\td(\vf\cdot\vn^\bot) \\
    {\vv}^{\eps,2}_1\cdot\vn - \frac{c^2}{\omega^2} \Big(
    (1+\imag)\sqrt{\frac{\nu}{2\omega}} \partial_\td^2\Div{\vv}^{\eps,2}_1
    &+ \frac{\imag\nu}{2\omega}
    \partial_\td(\kappa\partial_\td\Div {\vv}^{\eps,2}_1)\Big)  \\
    &= \frac{(\imag-1)}{\omega} \sqrt{\frac{\nu}{2\omega}}
    \partial_\td(\vf\cdot\vn^\bot)
    - \frac{\nu}{2\omega^2}\partial_\td(\kappa\,\vf\cdot\vn^\bot) \nonumber
 \end{align}
\end{subequations}

The systems for frequencies $0\cdot\omega$ or $2\cdot\omega$ do not change.

\subsection{Reducing the order of derivation in the system for frequency $2\cdot\omega$}
\label{sec:appendix:p22}

Using vector calculus identities and the relation between far field pressure and velocity at frequency $1\cdot\omega$ we can rewrite equation~\eqref{eq:pappr:22:PDE} as
  \begin{equation}
      \Delta {p}^{\eps,2}_2 + \frac{4\,\omega^2}{c^2} {p}^{\eps,2}_2
      = \frac{1}{2\omega^2} \mathbf{tr} \big(\mathbf{J}(\vf - \nabla
      p^{\eps,2}_1 )^\top \mathbf{J}(\vf - \nabla p^{\eps,2}_1 ) \big)
      + \frac{1}{c^2} \left( \frac{3}{2} \big(\vf - \nabla p^{\eps,2}_1\big) \cdot
        \nabla p^{\eps,2}_1 + \frac{\omega^2}{c^2}
        \big(p^{\eps,2}_1\big)^2 \right) \ ,
  \end{equation}
where $\vJ$ denotes the Jacobi matrix. Here, only second derivatives of $p^{\eps,2}_1$ appear, but no third derivatives.

\ifx\undefined\allcaps\def\allcaps#1{#1}\fi\def\cprime{$'$}

\end{document}